\newtheorem{theorem}{Theorem}[section]
\newtheorem{lemma}[theorem]{Lemma}
\theoremstyle{definition}
\newtheorem{definition}[theorem]{Definition}
\newtheorem{example}[theorem]{Example}
\newtheorem{corollary}{Corollary}
\theoremstyle{remark}
\numberwithin{equation}{section}
\begin{document}

\title[Hindman's theorem]{A recursive coloring function without $\Pi_3^0$ solution for Hindman's theorem\footnote{This research was partially supported by MOE2019-T2-2-121, R146-000-337-114 / A-0008494-00-00 and R252-000-C17-114 / A-0008454-00-00.}}


\author{Yuke Liao}
\address{}
\curraddr{}
\email{liao\_yuke@u.nus.edu}
\thanks{}


\subjclass[2020]{Primary }

\date{}

\dedicatory{}

\begin{abstract}
	We show that there exists a recursive coloring function $c$ such that any $\Pi^0_3$ set is not a solution to $c$ for Hindman's theorem. We also show that there exists a recursive coloring function $c$ such that any $\Delta^0_3$ set is not a solution to $c$ for Hindman's theorem restricted on sum of at most three numbers. 
\end{abstract}

\maketitle

\section{Introduction}

	Reverse mathematics is an area of mathematical logic trying to find the strength of theorems: which axioms are needed to prove these theorems. Among all axiom systems, reverse mathematics mainly focuses on five subsystems of second-order arithmetic, known as the Big Five: $$\text{\rm{RCA}}_0,\text{\rm{WKL}}_0,\text{\rm{ACA}}_0,\text{\rm{ATR}}_0,\Pi^1_1\text{\rm{-CA}}_0.$$ This paper will mention $\text{\rm{ACA}}_0$, which is equivalent to say that the Turing jump exists for every set of natural numbers.

One of the most important topics in reverse mathematics is the Ramsey's theorem and its variants. The reverse mathematics version of Ramsey's theorem can be seen as an infinite version of Ramsey's theorem in combinatorics. This theorem states that, for any coloring (of finite many colors) function $c$ on $[\mathbb{N}]^n$, there exists an infinite set that is homogeneous. This infinite set is called a solution to $c$.

The strategy of analyzing these theorems is to study the computational complexity of their solutions. Jockusch \cite{Jockusch1972} proved that the strength of Ramsey's theorem (for $n\geq 3$) is exactly $\text{\rm{ACA}}_0$ by examining the complexity of the solutions: every recursive coloring has a $\Pi_2^0$ solution and therefore $\text{\rm{ACA}}_0$ is sufficient to prove Ramsey's theorem; and there exists a coloring such that any solution to it can compute its Turing jump, which shows $\text{\rm{ACA}}_0$ is also necessary.

In this paper, we discuss about the reverse mathematics about Hindman's theorem. Hindman's theorem is a Ramsey-type combinatorics theorem about finite sums:

\begin{theorem}[Hindman, 1974\cite{HINDMAN19741}]
		For any coloring function $c:\mathbb{N}\rightarrow r$, where $r\geq 2$ is a natural number, there exists an infinite set $H\subset\mathbb{N}$ such that $FS(H)$ is monochromatic under $c$ where $FS(H)$ is the set consisting of all finite sums of different numbers from $H$:
\begin{equation*}
	FS(H)=\left\{\sum_{x\in F}x:F\subset H, F\text{ is finite and nonempty}\right\}.
\end{equation*}

\end{theorem}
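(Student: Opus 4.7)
The plan is to follow the Galvin--Glazer proof via idempotent ultrafilters in the Stone--\v{C}ech compactification $\beta\mathbb{N}$. First I would identify $\beta\mathbb{N}$ with the space of ultrafilters on $\mathbb{N}$ and extend addition by declaring $A \in p + q$ iff $\{n : A - n \in q\} \in p$, where $A - n = \{m : n + m \in A\}$. A routine check shows $(\beta\mathbb{N}, +)$ is a compact Hausdorff right-topological semigroup, i.e.\ for each fixed $q$ the map $p \mapsto p + q$ is continuous. The Ellis--Numakura lemma, proved by applying Zorn's lemma to compact subsemigroups, then yields an idempotent $p$ with $p + p = p$.

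Given the coloring $c : \mathbb{N} \to r$, exactly one color class $A_0 := c^{-1}(i)$ belongs to $p$. For any $A \in p$ define $A^\star = \{n \in A : A - n \in p\}$. The identity $p + p = p$ gives $A^\star \in p$ at once. The crucial self-improvement property, that $n \in A^\star$ implies $A^\star - n \in p$, will follow from the inclusion $(A - n)^\star \subseteq A^\star - n$: if $m \in (A - n)^\star$ then $n + m \in A$ and $A - (n + m) = (A - n) - m \in p$, so $n + m \in A^\star$.

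With this property in hand, I would iterate. Choose $h_1 \in A_0^\star$ and set $A_1 := A_0^\star \cap (A_0^\star - h_1) \in p$; having built $h_1 < \cdots < h_k$ and $A_k \in p$, pick $h_{k+1} \in A_k^\star$ with $h_{k+1} > h_k$ and set $A_{k+1} := A_k^\star \cap (A_k^\star - h_{k+1})$. A straightforward induction on $|F|$ then shows $\sum_{x \in F} x \in A_0$ for every nonempty finite $F \subseteq H := \{h_1, h_2, \dots\}$, so $FS(H)$ is monochromatic of color $i$.

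The main obstacle is conceptual rather than calculational: one must be comfortable with the nontrivial infinitary object $\beta\mathbb{N}$ and its semigroup structure, and with the application of Zorn's lemma in the Ellis--Numakura argument producing an idempotent. The technical pinch-point is the self-improvement step, since it requires careful bookkeeping of the definition of $+$ on ultrafilters and of set translations; once that lemma is secured, the inductive construction of $H$ is mechanical.
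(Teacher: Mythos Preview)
Your Galvin--Glazer argument via idempotent ultrafilters is correct and is now the standard proof of Hindman's theorem. The paper, however, does not prove this statement at all: it is quoted as background with a citation to Hindman's original 1974 paper, and the paper's own contribution concerns the reverse-mathematical complexity of solutions rather than the existence theorem itself. For what it is worth, Hindman's 1974 proof was a long elementary combinatorial argument quite different from the ultrafilter route; the proof you outline is the one found shortly afterward by Galvin and Glazer, and is both shorter and more conceptual, at the cost of invoking the axiom of choice (via Zorn's lemma in the Ellis--Numakura step) and the nonconstructive object $\beta\mathbb{N}$---a trade-off worth noting given that the surrounding paper is about effective and arithmetical content.
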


For convenience, we call such a set $H$ in the above theorem a solution to $c$ for Hindman's theorem. 	In this paper, Hindman's theorem is abbreviated as $\text{\rm{HT}}$.

The study of reverse mathematics of Hindman's theorem began with the paper by Blass, Hirst, and Simpson in 1987 \cite{bhs}. They established some basic results about complexity of solutions of Hindman's theorem:

\begin{theorem}[Blass, Hirst, and Simpson 1987\cite{bhs}]
		1.  For any coloring function $c:\mathbb{N}\rightarrow r$ , there exists a solution to $c$ for Hindman's theorem that is recursive in $c^{(\omega+1)}$, the $(\omega+1)$-th Turing jump of $c$. 
	
	2. There exists a recursive coloring function $c$ such that any solution to $c$ can compute $c'$; this result can be relativized to any degree. 
\end{theorem}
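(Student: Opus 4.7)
The plan is to handle the two parts with complementary techniques: Part 1 by a computability analysis of Hindman's original nested-set proof, and Part 2 by an explicit diagonal construction encoding the halting problem.

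For Part 1, I would build by recursion on $n$ a strictly increasing sequence $h_0<h_1<\cdots$ together with a nested sequence $H_0\supseteq H_1\supseteq\cdots$ of infinite subsets of $\mathbb{N}$, with $h_n=\min H_n$, $H_{n+1}\subseteq H_n\cap(h_n,\infty)$, and with the crucial property that for every $s\in FS(\{h_0,\ldots,h_n\})$ the function $x\mapsto c(s+x)$ is constant on $H_{n+1}$ with value $c(s)$. Once this is built, $H:=\{h_n:n\in\mathbb{N}\}$ is monochromatic for $FS(H)$ under $c$, by an induction on the number of summands. At stage $n$, refining $H_n$ to $H_{n+1}$ amounts to solving finitely many simultaneous pigeonhole problems on $H_n$, which is answerable by one Turing jump relative to the degree of $H_n$. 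Thus $H_n$ is uniformly recursive in $c^{(n)}$; the whole construction is recursive in $c^{(\omega)}$, and extracting the actual sequence $\{h_n\}$ as an infinite set (which needs an infinity verification of tails) lifts the complexity by one further jump, giving a solution recursive in $c^{(\omega+1)}$.

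For Part 2, the plan is to design a recursive $c$ so that any infinite $H$ with $FS(H)$ monochromatic computes $\emptyset'=c'$. The key structural tool is the binary support map $n\mapsto\mathrm{supp}(n)$, namely the set of positions of $1$'s in the binary expansion of $n$: when $x,y$ have disjoint supports, $\mathrm{supp}(x+y)=\mathrm{supp}(x)\cup\mathrm{supp}(y)$. I would partition $\mathbb{N}$ into a recursive sequence of disjoint blocks $B_0<B_1<\cdots$ of bit positions, reserving block $B_e$ to code whether $\phi_e(e)\downarrow$, and define $c(n)$ by the parity of $|\{e:B_e\cap\mathrm{supp}(n)\neq\emptyset\text{ and }\phi_e(e)\text{ halts within }n\text{ steps}\}|$. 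Any solution $H$ may be thinned (inside $H$) to an infinite subset $H'$ whose elements have pairwise-disjoint supports, on which $FS(H')$ remains monochromatic. Walking along $H'$ and comparing the observed $c$-values against the colors predicted by each possible halting pattern then yields an $H$-recursive procedure deciding $\emptyset'$. Relativization to an arbitrary degree is immediate by running the same construction over the oracle.

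The main obstacle for Part 1 is careful bookkeeping so that the nested pigeonhole arguments collectively consume only $\omega+1$ jumps rather than higher levels of the hyperarithmetical hierarchy. The main obstacle for Part 2 is ensuring the coloring is robust: no clever infinite solution can evade the encoding, which reduces to verifying that the disjoint-support thinning preserves both infinitude of $H'$ and the coding structure that allows $\emptyset'$ to be decoded from a monochromatic $FS(H')$.
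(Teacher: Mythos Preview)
The paper does not prove this theorem; it is quoted from \cite{bhs} as background, so there is no in-paper proof to compare against. I will therefore assess your proposal on its own terms.

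Your Part~1 sketch contains a genuine gap. The inductive property you want for $H_{n+1}$---that $c(s+x)=c(s)$ for every $s\in FS(\{h_0,\dots,h_n\})$ and every $x\in H_{n+1}$---simply need not be attainable. Already at the base step one would need an infinite $H_1\subseteq H_0\setminus\{h_0\}$ with $c(h_0+x)=c(h_0)$ for all $x\in H_1$, and a generic $2$-coloring has no such set. Pigeonhole only gives you an infinite set on which $x\mapsto c(s+x)$ is \emph{some} constant, not the prescribed value $c(s)$; and even the weaker version does not suffice, since intersecting finitely many infinite subsets can yield a finite set. The combinatorial proofs of Hindman's theorem (Hindman's original, Baumgartner's streamlined version) avoid this by carrying a much stronger invariant---roughly, that each $H_n$ already has $FS(H_n)$ controlled---and achieving that invariant at level $n+1$ from level $n$ is itself an $\omega$-length iteration, not a single finite pigeonhole step. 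It is precisely this nested $\omega$-iteration that pushes the bound up to $c^{(\omega+1)}$; your accounting gets the right number of jumps but for the wrong reason, and the construction as written would stall at stage~1.

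Your Part~2 plan is in the right spirit---encoding halting information into positions of the binary support and exploiting that a solution can be thinned to have pairwise disjoint supports---and this is indeed the mechanism Blass--Hirst--Simpson use. The specific coloring you propose (parity of the number of blocks $B_e$ meeting $\mathrm{supp}(n)$ with $\phi_e(e)\!\downarrow$ in $n$ steps) would need care: the time bound ``within $n$ steps'' makes $c(x+y)$ depend on the magnitude $x+y$ rather than just on $\mathrm{supp}(x)\cup\mathrm{supp}(y)$, so the decoding argument you sketch does not go through cleanly. The standard fix is to let the stage be governed by $\mu(n)$ (the top bit) and the question by $\lambda(n)$ (the bottom bit), so that in a disjoint-support sum the relevant parameters are read off the extremal summands; this is the idea behind the $\lambda/\mu$ machinery used throughout the present paper.
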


 	Based on these results about the complexity of solutions, they showed that the strength of Hindman's theorem is between $\rm{ACA_0}$ and $\rm{ACA_0^+}$: $\rm{ACA_0}$ is necessary and $\rm{ACA_0^+}$ is sufficient. Simply speaking,  $\rm{ACA_0^+}$ is to say the $\omega$-th Turing jump of every set exists. However, this result has not been improved since 1987.

 In addition, before presenting their main results, Blass, Hirst, and Simpson provided another theorem as an example, which gives a lower bound on the complexity of a solution for Hindman's theorem. That is:
 
 \begin{theorem}[Blass, Hirst, and Simpson 1987\cite{bhs}]\label{lem:ch1s1:improvement}
 	There exists a recursive coloring function $c$ in two colors without any $\Delta^0_2$ solutions for Hindman's theorem. 
 \end{theorem}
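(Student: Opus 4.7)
The plan is a finite-injury priority construction of the recursive coloring $c:\mathbb{N}\to\{0,1\}$. Fix a uniformly recursive listing of $\Delta^0_2$ approximations: for each index $e$, a recursive $h_e:\mathbb{N}^2\to\{0,1\}$ such that as $e$ varies the sets $H_e(n) = \lim_s h_e(n,s)$ (when the limit exists pointwise) enumerate all $\Delta^0_2$ sets. Write $H_e[s] = \{n : h_e(n,s)=1\}$. For each $e$, I set the requirement
\[
R_e:\ H_e \text{ infinite}\ \Longrightarrow\ FS(H_e)\ \text{is not } c\text{-monochromatic}.
\]

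A first observation, which corrects the naive approach, is that $R_e$ cannot be met merely by arranging for $H_e$ itself to be bichromatic. The color class $c^{-1}(0)$ is a recursive, hence $\Delta^0_2$, infinite subset of $c^{-1}(0)$, so some $\Delta^0_2$ sets are unavoidably monochromatic under any recursive $c$. The strategy must therefore genuinely exploit the additive structure of $FS(H_e)$. Concretely, the $R_e$-strategy aims to secure a pair $a_e < b_e$ in $H_e$ with $c(a_e)\neq c(a_e+b_e)$; since both $a_e$ and $a_e+b_e$ lie in $FS(H_e)$, this refutes monochromaticity.

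At stage $s$, strategy $R_e$ examines $H_e[s]$ and either keeps its current pair or reappoints it. A reappointment chooses $a_e < b_e$ in $H_e[s]$ such that $a_e, b_e > s$ (so neither color is yet committed), $a_e + b_e$ is distinct from every sum currently reserved by a higher-priority requirement, and $(a_e,b_e)$ is the least such pair above a moving threshold $t_e$. The strategy reserves $c(a_e):=0$ and $c(a_e+b_e):=1$, to be enforced when those stages arrive. Priority is by index: any reappointment by $R_e$ injures every $R_{e'}$ with $e'>e$, which must then re-examine their reservations and possibly reappoint.

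The main obstacle I expect is verifying that each $R_e$ stabilizes; this is a standard finite-injury induction. Assuming that each $R_{e'}$ with $e'<e$ acts only finitely often, only finitely many positions are permanently reserved above $R_e$. Because $H_e$ is $\Delta^0_2$, the approximation $h_e(n,\cdot)$ stabilizes at every fixed $n$, so if at some stage $R_e$ picks a pair $(a_e,b_e)$ that is genuinely in $H_e$ and whose sum lies beyond the current stage and avoids higher-priority reservations, that pair is never invalidated. A fair rule (always pick the least valid pair above $t_e$, incrementing $t_e$ whenever the commitment at $a_e+b_e$ cannot be honored) forces such a pair to be chosen eventually, using the hypothesis that $H_e$ is infinite to guarantee arbitrarily large genuine witnesses. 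Since each stage performs only a bounded computation on the recursive approximations, the resulting $c$ is recursive; by the eventual satisfaction of every $R_e$, no $\Delta^0_2$ set is a solution.
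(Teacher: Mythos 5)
Your overall plan (finite-injury priority, one requirement $R_e$ per $\Delta^0_2$ approximation, seeking a pair $a_e<b_e$ in $H_e$ with $c(a_e)\neq c(a_e+b_e)$) is a genuinely different approach from the one in Blass--Hirst--Simpson and from the block/apartness machinery used in Sections~2--3 of this paper. Your opening observation that making $H_e$ itself bichromatic is hopeless, so the additive structure of $FS(H_e)$ must be exploited, is correct and important. However, the verification has a real gap, and the construction as you stated it in fact fails.

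The problematic step is the claim that ``if at some stage $R_e$ picks a pair $(a_e,b_e)$ that is genuinely in $H_e$ and whose sum lies beyond the current stage and avoids higher-priority reservations, that pair is never invalidated.'' This is false. A $\Delta^0_2$ approximation is permitted to drop a genuine element of $H_e$ temporarily, and your rule requires $a_e,b_e>s$ at the stage $s$ of appointment, so the approximation at $a_e,b_e$ has had no chance to settle. Take $e=0$ and let
\[
h_0(n,s)=\begin{cases}0,&s=n+1,\\1,&\text{otherwise},\end{cases}
\]
so that $H_0=\mathbb{N}$ (recursive, hence $\Delta^0_2$) and $FS(H_0)=\mathbb{N}^{+}$. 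At stage $s$, $R_0$ appoints the least pair above $s$, namely $(s+1,s+2)$, and reserves $c(s+1):=0$ and $c(2s+3):=1$. At stage $s+2$ we have $h_0(s+1,s+2)=0$, so $R_0$ is injured and reappoints with $(s+3,s+4)$. Since the injury occurs at stage $s+2<2s+3$, the $1$-reservation at $2s+3$ is cancelled before it is honored, and this repeats forever. Every $1$-reservation is cancelled before its stage arrives, so $c\equiv 0$ on a cofinite set and $FS(H_0)=\mathbb{N}^{+}$ is monochromatic. Your ``fair rule'' does not save this: its threshold $t_e$ only advances when ``the commitment at $a_e+b_e$ cannot be honored,'' which never fires here, and even advancing $t_e$ on every injury does not help because the adversarial drop always occurs just after the appointed $a_e$, no matter how large.

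The root cause is the requirement $a_e,b_e>s$. To make a priority argument of this shape work one wants instead to appoint $a_e<b_e\le s<a_e+b_e$, so that $c(a_e)$ (and $c(b_e)$) are already committed and the only thing reserved is $c(a_e+b_e):=1-c(a_e)$. Then once that single reservation is honored the requirement is satisfied forever regardless of later wobbles in the approximation; one can even ignore subsequent ``injuries'' caused by temporary drops of $a_e$ or $b_e$. Making that rigorous needs a further argument that such pairs with $a_e+b_e>s$ keep appearing (infinitude of $H_e$) and do not collide with higher-priority reservations --- and in fact you should also require that $a_e$ itself avoids higher-priority reserved positions, since your current condition checks only that $a_e+b_e$ avoids higher-priority \emph{sums}, leaving a possible clash between some $a_e$ and a higher-priority $a_{e'}+b_{e'}$. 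These fixes push the construction toward the block/apartness structure used here and in \cite{bhs}, which is designed precisely so that the candidate witnesses for a given requirement live in a fixed finite block $B^n$ whose approximation can be allowed to settle before the coloring commits.
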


We may call Theorem \ref{lem:ch1s1:improvement} the $\Delta_2^0$-unsolvability of Hindman's theorem. In the paper \cite{towsner}, Towsner gave a simplified proof of Hindman theorem using the techniques of half-match and full-match  of a coloring function. Towsner showed that every recursive coloring function has a full-match which is an arithmetic set and hence a solution of $\emptyset^{(\omega+1)}$ degree. Moreover, if every recursive coloring has a full-match which is a recursive set, then every recursive coloring will have an arithmetic solution for Hindman's theorem and $\text{ACA}_0$ will be the exact strength of Hindman's theorem. In the same paper, Towsner also discussed about the improvement of Theorem \ref{lem:ch1s1:improvement}.

In this paper, we will prove the $\Pi_3^0$-unsolvability of Hindman's theorem:

\begin{theorem}
	There exists a recursive coloring function $c$ in finitely many colors without any $\Pi_3^0$ solutions for Hindman's theorem. 
\end{theorem}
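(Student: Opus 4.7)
The plan is to construct $c$ by a stage-by-stage diagonalization against an effective enumeration $\{A_e\}_{e\in\omega}$ of all $\Pi_3^0$ subsets of $\mathbb{N}$. Each $A_e$ is presented by a recursive predicate $R_e$ with $n \in A_e \iff \forall x\, \exists y\, \forall z\, R_e(n,x,y,z)$, and I want to meet requirements $\mathcal{R}_e$: either $A_e$ is finite, or $FS(A_e)$ is not monochromatic under $c$. The number of colors used is fixed at the outset, chosen large enough to accommodate the construction.

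I would build $c$ by defining its values on successive initial segments $[0,N_0),[N_0,N_1),\ldots$ of $\mathbb{N}$. The default value of $c(n)$ is a recursive ``structural feature'' of $n$---for instance, a residue mod $k$ of the position of a distinguished block in a mixed-radix expansion of $n$---chosen along the lines of the coloring behind Theorem~\ref{lem:ch1s1:improvement}, so that any infinite monochromatic $FS$-set is forced to interact recursively with the structure. On each segment I also reserve freedom to perturb this default on a small number of coordinates in order to satisfy the stage's active requirement: using the partial information about $R_s$ available at stage $s$, I identify candidate members of $A_s$ and fix $c$ on suitable finite sums of these candidates so as to exhibit two distinct colors inside $FS(A_s)$.

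The combinatorial engine will follow the half-match/full-match philosophy of Towsner~\cite{towsner}: first ensure that no $\Pi_3^0$ half-match of $c$ can extend to a full-match, by forcing the coloring to separate sums of three or more elements. This is consistent with, and informed by, the companion result on $\Delta_3^0$-unsolvability for the three-sum restriction announced in the abstract. Priority conflicts between requirements will be resolved by a standard finite-injury framework, with the perturbations used to meet $\mathcal{R}_s$ confined to a set small enough not to disturb the structural-feature coloring for higher-priority requirements.

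The main obstacle is the third quantifier alternation: membership $n \in A_e$ is a triple-quantifier statement, so tentative information about $A_e$ may be revised infinitely often along two different axes, and the diagonalizing color choices made at stage $s$ must survive all later revisions of $A_s$ as seen through $R_s$. Overcoming this will require encoding the approximation structure of $A_e$ directly into the scaffolding of $c$---not merely reacting to it at each stage---so that any limit-of-limits switch in $A_e$'s apparent membership automatically induces a color mismatch somewhere in $FS(A_e)$. This is precisely the step where the advance beyond Theorem~\ref{lem:ch1s1:improvement} must be made, and I expect it to be the most delicate part of the argument.
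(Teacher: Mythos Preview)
What you have written is a plan, not a proof, and the plan has a genuine gap at exactly the point you flag as ``the most delicate part.'' Two specific problems:

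\medskip

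\textbf{Local perturbation does not survive $FS$-closure.} You propose to fix a default structural coloring and then ``perturb this default on a small number of coordinates'' at each stage to meet $\mathcal{R}_s$, with the perturbations ``confined to a set small enough not to disturb'' other requirements. But a solution for Hindman's theorem is a set $H$ with $FS(H)$ monochromatic; changing $c$ at a single point $m$ affects whether \emph{any} $H$ with $m\in FS(H)$ remains a solution, and $m$ lies in $FS(H)$ for uncountably many $H$. There is no locality to exploit. The paper avoids this entirely: the coloring is never perturbed. Instead, it is defined globally by a tree structure (Lemma~\ref{lem_main}) so that for \emph{every} $w$ and $n<\lambda(w)$ one has $c(w+R(n,w))\equiv c(w)+1\pmod r$, and the diagonalization is pushed into the definition of the auxiliary function $R$.

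\medskip

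\textbf{The triple alternation is handled by iterating the tree lemma, not by finite injury.} A finite-injury framework gives you one level of limit; a $\Pi_3^0$ set needs $\lim_y\lim_s$ plus the outer universal quantifier, i.e.\ three. The paper's mechanism is to apply the tree-coloring lemma \emph{twice}: first to recursive functions $Q_n(y,k,s)$ that guess (via $F(i,x,y,s)$) which element of $B^y$ lies in $A_i$, producing for each $n$ a function $R(n,\cdot)$ with range $B^n$; and then to $R$ itself to get $c$. The first application guarantees that, for a correctly chosen $n$, the values $R(n,w)$ cycle through all of $B^n$ as $w$ ranges over suitable finite sums from $A_i$ (Lemma~\ref{lem:final3}); in particular $R(n,w)$ hits the element of $B^n\cap A_i$ for some $w\in FS(A_i)$. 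The second application then gives $c(w)\neq c(w+R(n,w))$. Your outline contains no analogue of this nesting, and ``encoding the approximation structure of $A_e$ directly into the scaffolding of $c$'' is precisely what the construction of $Q_n$ and $R$ accomplishes --- but you have not said how.

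\medskip

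In short: the missing idea is the two-level use of Corollary~\ref{col:pre1}, first $Q_n\mapsto R(n,\cdot)$ and then $R\mapsto c$, together with the guessing functions $x(i,n,y,s)$ and $B(i,n,y,s)$ built from the $\Pi_3^0$ approximation of Lemma~\ref{lem:list}. Without that, the proposal does not get past the $\Delta_2^0$ barrier of Theorem~\ref{lem:ch1s1:improvement}.
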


Another topic of Hindman's theorem is the restricted version of Hindman's theorem. The most common restricted Hindman theorem's is the following theorem $\text{\rm{HT}}^{\leq k}_r$:

\begin{theorem}[restricted Hindman's theorem $\text{\rm{HT}}^{\leq k}_r$ for fixed numbers $r,k$]
	For any coloring function $c:\mathbb{N}\rightarrow r$ and any natural number $k$, there exists an infinite set $H\subset \mathbb{N}$ such that $FS^{\leq k}(H)$ is monochromatic under $c$. Here, $FS^{\leq k}(H)$ is defined as following: 	
	\begin{equation*}
		FS^{\leq k}(H)=\left\{\sum_{x\in F}x:F\subset H, |F|\leq k\text{ is and nonempty}\right\}.
	\end{equation*}
\end{theorem}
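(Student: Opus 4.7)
The plan is to deduce $\text{HT}^{\leq k}_r$ as a one-line corollary of the full Hindman's theorem stated earlier in the introduction. Given any coloring $c:\mathbb{N}\to r$ and any natural number $k$, apply Hindman's theorem to $c$ to obtain an infinite set $H\subseteq\mathbb{N}$ such that $FS(H)$ is monochromatic under $c$. Since $FS^{\leq k}(H)\subseteq FS(H)$ by definition, the same $H$ witnesses $\text{HT}^{\leq k}_r$. This is the entire argument.

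There is essentially no combinatorial obstacle: the restricted theorem is a strict weakening of Hindman's theorem, so nothing new needs to be proved, and one does not even need to inspect the proof of HT. What the reduction does \emph{not} give is any improved bound on the complexity of the solution — the set $H$ one obtains in this way inherits only the bounds known for HT itself, in particular the $c^{(\omega+1)}$ bound of Blass--Hirst--Simpson.

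The genuinely interesting content of $\text{HT}^{\leq k}_r$, and the reason the paper isolates it as a separate statement, is that for each fixed $k$ the restricted version is expected to admit solutions of strictly lower arithmetic complexity than unrestricted HT. A direct argument producing such solutions — rather than invoking HT as a black box — would build $H$ one element at a time via iterated Ramsey-type applications on $[\mathbb{N}]^j$ for $j\le k$ (using the induced colorings $c_j(\{x_1<\cdots<x_j\}) := c(x_1+\cdots+x_j)$), with an additional mechanism to force the constant values across different cardinalities $j\le k$ to coincide on a common infinite subset. The latter step — aligning the constants, which is automatic in a pure Ramsey argument for a single exponent but not across exponents — would be the main technical obstacle, and it is precisely where the extra strength needed beyond iterated Ramsey must enter. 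For the bare existence statement as worded, however, the one-line reduction from HT above suffices, and this is the proof I would present.
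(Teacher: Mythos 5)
Your one-line reduction is correct, and it is precisely the argument the paper relies on: $\text{\rm{HT}}^{\leq k}_r$ is stated in the introduction without proof because $FS^{\leq k}(H)\subseteq FS(H)$ makes it an immediate weakening of Hindman's theorem. Your surrounding commentary on the reverse-mathematical significance also matches the paper's framing, which stresses that the interesting open issue is the \emph{complexity} of solutions (e.g.\ whether $\text{\rm{ACA}}_0$ proves even $\text{\rm{HT}}^{\leq 2}_2$), not the bare existence statement.
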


There are plenty of results about the restricted Hindman theorems  $\text{\rm{HT}}^{\leq k}_r$, see \cite{Dzhafarov20172} and \cite{NewBoundsofHindman}. However, even for such weak theorems, we do not know much about their strength. For example, we do not know if $\rm{ACA_0}$ can prove any such restricted Hindman's theorem, even for $\text{\rm{HT}}^{\leq 2}_2$.

In this paper, we will not focus on the numbers of colors $r$ and will consider the following restricted version written as $\text{\rm{HT}}^{\leq k}$:

\begin{theorem}[restricted Hindman's theorem $\text{\rm{HT}}^{\leq k}$ for a fixed number $k$]
	For any coloring function $c:\mathbb{N}\rightarrow r$, where $r\geq 2$ is a natural number , there exists an infinite set $H\subset \mathbb{N}$ such that $FS^{\leq k}(H)$ is monochromatic. 
\end{theorem}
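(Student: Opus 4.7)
The plan is to derive $\text{HT}^{\leq k}$ as an immediate corollary of the unrestricted Hindman's theorem stated earlier in the excerpt. Given a coloring $c:\mathbb{N}\to r$, invoke Hindman's theorem to obtain an infinite $H\subset\mathbb{N}$ such that $FS(H)$ is monochromatic under $c$. Since by definition
\begin{equation*}
FS^{\leq k}(H)=\left\{\sum_{x\in F}x:F\subset H,\ 0<|F|\leq k\right\}\subseteq FS(H),
\end{equation*}
the restricted sumset inherits the monochromaticity, so the very same $H$ witnesses $\text{HT}^{\leq k}$. No separate combinatorial construction is required.

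The only conceptual caveat — not really an obstacle for proving the statement itself — is that this derivation invokes the full strength of Hindman's theorem (which is provable in $\text{ACA}_0^+$), whereas one expects the reverse-mathematical strength of $\text{HT}^{\leq k}$ to be considerably weaker; indeed, as the excerpt remarks, it is open whether $\text{ACA}_0$ proves even $\text{HT}^{\leq 2}_2$. If one wanted an independent proof that avoided Hindman's theorem, one could instead appeal to the finite combinatorial arguments of Deuber/Schur type used in \cite{Dzhafarov20172} and \cite{NewBoundsofHindman}, which handle small $k$ directly. For the mere existence statement here, however, such refinements are unnecessary and the one-line inclusion of sumsets above completes the proof.
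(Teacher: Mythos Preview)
Your derivation is correct: the inclusion $FS^{\leq k}(H)\subseteq FS(H)$ immediately reduces $\text{HT}^{\leq k}$ to the full Hindman theorem. The paper itself states this theorem only as background in the introduction and gives no proof, so there is nothing further to compare.
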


In Section 2, we will prove the $\Delta_3^0$-unsolvability for $\text{\rm{HT}}$. In Section 3, we will prove the $\Pi^3_0$-unsolvability for $\text{\rm{HT}}$ with weak apartness (defined in Section 2). In section 4, we will prove the $\Delta_3^0$-unsolvability for $\text{\rm{HT}}^{\leq 3}$ and $\Pi_3^0$-unsolvability for $\text{\rm{HT}}$ without weak apartness.

	\section{Proof of $\Delta_3^0$ unsolvability}
	In this section, we will construct a coloring function $c$ in two colors which is not a solution to any infinite $\Delta_3^0$ set. First, we list some notations. Some of them are from Blass, Hirst, and Simpson's paper\cite{bhs} and Hindman's paper\cite{HINDMAN19741}.

\begin{definition}
	
	1. Define two functions $\mu(x),\lambda(x)$ on $\mathbb{Z^+}$ as follows:
	
	For $x=\sum_{i=0}^{k}2^{n_i}\in\mathbb{Z^+}$ where $n_0<\cdots <n_k$, define $\mu(x)=n_k,\lambda(x)=n_0$.

	2. For two positive integers $x<y$, we write that $x\ll y$ or $y\gg x$ if $\mu(x)<\lambda(y)$.  
	A set $A$ has apartness (or is with apartness) if for any $x<y$ in $A$, $x\ll y$.

	3. For each natural number $n$, define the set $B^n=\{x\in \mathbb{Z^+}:\mu(x)=n\}$ and $B^{\leq n}=\{x\in \mathbb{Z^+}: \mu(x)\leq n\}$.
	
	4. A set $A$ has weak apartness if for any natural number $m$, $B^m\cap A$ contains at most one element and for any natural number $l$, there are at most two numbers $x$ such that $\lambda(x)=l$.
	
	Hence if a set $A$ has weak apartness, then for any natural number $L$, there exists $M$ such that for any $x>M$ in $A$, $\lambda(x)>L$. 
\end{definition}

	For convenience, for a set $A$, we use $A(x)$ to denote its characteristic function. We say that a coloring $c$ or a construction of coloring kills a set $A$, or a set $A$ is killed, if $FS(A)$ is or will not be monochromatic (under $c$). And in this paper, any quantifier without specified range is over the set of natural numbers $\mathbb{N}$, and the same for intervals. For example, $\forall x \phi$ means that for any $x\in\mathbb{N}$, $\phi$ holds and $n\in(a,b),n\in(a,b],n\in[a,b)$ means that $n$ is an integer between $a$ and $b$, not a real number. And for all limit notations such as $\lim_x f(x)$ or $\lim\limits_{x}f(x)$, it means: $\lim\limits_{x\rightarrow\infty}f(x)$. A number $x\gg Y$ for a set $T$ means for any $y\in Y$, $x\gg y$.

	\subsection*{Strategy}

A naive strategy is to set infinitely many requests $\langle x,y\rangle\in\mathbb{N}^2$ and each such request means that let $x$ be the Red and $y$ be Green. But this strategy does not succeed. Instead, we try another kind of request. A request $\langle w,x\rangle$ means that let $w+x$ and $w$ be different colors. We must limit the number of requests we set. For example, if we set requests $\langle 4,2\rangle,\langle 4,3\rangle$, and $\langle 6,1\rangle$, then because $4+2=6,4+3=7,6+1=7$, all three numbers $4,6,7$ need to have different colors, which contradicts the condition that $c$ has only two colors. 

\begin{example}
		Consider a set of requests $\{\langle w,2^n\rangle:\lambda(w)>n\}$. We want to find a coloring function $c:\mathbb{N}\rightarrow \{0,1\}$ such that for any two numbers $w$,$n$ such that $\lambda(w)>n$ (i.e., $2^{n+1}|w$), $w$ and $w+2^n$ have different colors.  
	
	We can find a simple solution: let $c(w)$ be the number of digits $1$ mod 2 in the binary representation of $w$ : for example, $c(2)=c(10_2)=1$, $c(3)=c(11_2)=0$, $c(4)=c(100_2)=1$, $c(5)=c(101_2)=0$ and so on.
\end{example}

We will construct a recursive function $R(n,w)$ from $\mathbb{N}^2$ to $\mathbb{N}$ and each $\langle w, R(n,w)\rangle$ will be a request, such that:

C1. There exists a recursive coloring function $c:\mathbb{N}\rightarrow \{0,1\}$ such that for any $w$ and $n<\lambda(w)$,  $c(w)\neq c(R(n,w)+w)$.

C2. For every infinite $\Delta_3^0$ set $A$ with weak apartness, there exists $w\in FS(A)$ and $n<\lambda(w)$ such that $R(n,w)$ is in $A$.

The following lemma provides a sufficient condition for $R(n,w)$ to satisfy C1.

\begin{lemma}\label{lem:main0}
		Let $R$ be a function from a recursive subset of $\mathbb{N}^2$ to $\mathbb{N}$ such that $R(n,w)\in B^n$. There exists a coloring function $c:\mathbb{N}\rightarrow \{0,1\}$ uniformly recursive in $R$ such that for any $w$ and $n<\lambda(w)$, if $R(n,w)$ is defined, then
		\begin{equation*}
			c(w)\neq c(R(n,w)+w).
		\end{equation*} 
\end{lemma}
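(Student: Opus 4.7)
The plan is to recognize the lemma as a graph $2$-coloring problem. The key structural fact is that when $n<\lambda(w)$ and $R(n,w)\in B^n$, the binary representations of $w$ and $R(n,w)$ are disjoint (every bit of $R(n,w)$ lies strictly below every bit of $w$), so $w+R(n,w)$ is simply the bit-wise union of $w$ and $R(n,w)$. Thus the constraint $c(w)\neq c(w+R(n,w))$ is just an edge in an undirected graph $G$ on $\mathbb{N}$, and the lemma amounts to producing a uniformly recursive $2$-coloring of $G$.

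I would construct $c$ by recursion on $x$. Set $c(0)=0$. For $x\geq 1$ and each bit $n$ of $x$ with $n<\mu(x)$, split $x=w+u$ so that $\lambda(w)>n$ and $u\in B^n$; call $(n,w)$ an \emph{actual predecessor} of $x$ iff $R(n,w)$ is defined and equals $u$. Every actual predecessor $w$ satisfies $w<x$, so $c(w)$ is already available. If $x$ has no actual predecessor, put $c(x)=0$; otherwise fix a canonical choice $w^{\ast}$ (for instance, the actual predecessor with the largest $n$, which is the smallest one as a number) and set $c(x)=1-c(w^{\ast})$. Only finitely many $R$-values, indexed by the bits of $x$, need to be consulted, so $c$ is uniformly recursive in $R$.

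The main obstacle is consistency: when $x$ admits several actual predecessors $w_1,\dots,w_p$, their $c$-values must all coincide so that the recipe automatically ensures $c(x)\neq c(w_i)$ for every $i$. Equivalently, the graph $G$ must be bipartite. I would prove bipartiteness by contradiction, taking a shortest odd cycle in $G$ and exploiting two simple invariants along any edge $w\to w+R(n,w)$: the value $\mu$ is preserved (so cycles live in a single $\mu$-component), while $\lambda$ drops strictly from the smaller endpoint to the larger endpoint. Letting $N$ be the largest edge-position appearing in the hypothetical shortest odd cycle, only edges carrying position $N$ can toggle the bit at position $N$, which, by closure of the cycle, forces an even number of such edges.

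The heart of the argument is then to pair those $N$-edges and contract the intervening arcs to produce a strictly shorter odd cycle living in a restricted subgraph where all edge positions are below $N$, contradicting minimality. Making this reduction go through while respecting the rigidity of $R$ across the contracted arcs is the most delicate part of the proof. Once bipartiteness of $G$ is in hand, the recursive construction above immediately yields the required $c$.
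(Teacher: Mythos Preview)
Your reduction to bipartiteness is the right instinct, but the claimed equivalence ``consistency of the greedy recipe $\Leftrightarrow$ $G$ is bipartite'' is false, and the failure already occurs for the specific graphs produced by this lemma. Bipartiteness only says that \emph{some} proper $2$-coloring exists; it does not say that the particular coloring produced by ``pick one smaller neighbor $w^{\ast}(x)$ and set $c(x)=1-c(w^{\ast}(x))$, else $c(x)=0$'' is proper. The problem is that a vertex $x$ may have \emph{no} smaller neighbor at all, in which case you default to $c(x)=0$, but nothing forces such an $x$ to lie on the $0$-side of the bipartition. Concretely, take $s=3$ and define
\[
R(0,w)=1\ \text{for all }w,\qquad R(1,8)=2,\quad R(1,12)=3,\quad R(2,8)=6.
\]
On $B^{3}=\{8,\dots,15\}$ the edges are
\[
\{8,9\},\ \{8,10\},\ \{10,11\},\ \{8,14\},\ \{14,15\},\ \{12,15\},\ \{12,13\},
\]
which is a tree (hence bipartite). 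Vertex $12$ has no smaller neighbor: its only set bit below $3$ is bit $2$, and the edge from $w_{2}=8$ at position $2$ goes to $8+6=14\neq 12$. Your recipe therefore sets $c(12)=0$. But $c(14)=1-c(8)=1$, and now $15$ has two predecessors, $14$ (via $R(0,14)=1$) and $12$ (via $R(1,12)=3$), with $c(14)=1\neq 0=c(12)$. Whichever you declare canonical, one of the edges $\{14,15\}$, $\{12,15\}$ is monochromatic. So the greedy construction fails even though the graph is a tree.

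The paper avoids this by proving the much sharper structural fact that the graph on each $B^{s}$ is a \emph{tree}: it shows inductively that $T_{w,n}=T_{w,n-1}\cup T_{w+2^{n},\,n-1}$ with exactly the single bridge edge $\{w,\,w+R(n,w)\}$ between the two halves, so $B^{s}=T_{2^{s},\,s-1}$ is a tree. It then defines $c(v)$ as the length modulo $2$ of the unique path from $2^{s}$ to $v$. This is a global definition using the tree structure, not a local greedy one, and it is what makes the argument go through; it is also clearly uniformly recursive in $R$. If you want to keep the graph-coloring viewpoint, you need either this tree fact (and then color by distance from the root, not by a chosen predecessor) or a direct BFS from the minimum of each $\mu$-component; merely establishing bipartiteness and then running your recursion is not enough.

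Separately, your sketch of bipartiteness via a shortest-odd-cycle argument is left at exactly the point where the work lies: you observe that the highest edge-level $N$ occurs an even number of times and then propose to ``pair those $N$-edges and contract the intervening arcs'' to get a shorter odd cycle with all edge-levels below $N$. But the contracted arcs are not edges of $G$, so after contraction you are no longer in $G$ (let alone in a graph with the same $R$), and you give no mechanism for replacing an arc between two level-$N$ edges by a single edge or a shorter odd walk. This step needs a genuine argument; the paper's inductive decomposition into the halves $T_{w,n-1}$ and $T_{w+2^{n},\,n-1}$ with a single bridge is precisely the clean replacement for it, and it yields the stronger conclusion (acyclicity) at no extra cost.
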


\begin{proof}
	Because the domain of $R(n,w)$ is recursive, we extend $R$ to whole $\mathbb{N}^2$.
	
	We construct the coloring function $c$ stage by stage. At stage $s$, we color numbers in $B^s$. For each $s$, we will create a graph with vertex set $B^s$ and edges depending on the function $R$ such that this graph is a tree. So for any $w_1,w_2\in B^s$, there exists a unique path $P$ from $w_1$ to $w_2$.

	Fix a stage $s$. For each $w\in B^s$ and $n< \lambda(w)$, We add one edge between $w+R(n,w)$ and $w$ and call it $e_{w,w+R(n,w)}$. This graph does not contain any multiple edges or self-loops. We show that this graph is a tree by induction. For every $w\in B^s$ and $n<\lambda(w)$, define the vertex set $T_{w,n}=\{w+x:0\leq x<2^{n+1}\}$ and consider the corresponding induced subgraph. By definition, $T_{w_1,n_1}=T_{w_2,n_2}$ iff $(w_1,n_1)=(w_2,n_2)$.
	
	If we write integers in binary representation, then the vertex set $T_{w,n}$ looks like $$\hspace*{-10ex}T_{w,n}=\{(w_sw_{s-1}\cdots w_{n+1}x_n\cdots x_{0})_2:x_i\in\{0,1\}\},$$	where $w_sw_{s-1}\cdots w_{n+1}$ is the highest $s-n$ many digits of $w$ and it is also the common part of all numbers in the vertex set $T_{w,n}$. All other digits of $w$ are $0$. And by definition, for $n\geq 1$,\begin{align*}
		T_{w,n}=&\{w+x:0\leq x<2^{n+1}\}\\
		=&\{w+x:0\leq x<2^{n}\}\cup \{w+x:2^n\leq x<2^{n+1}\}\\
		=&\{w+x:0\leq x<2^{n}\}\cup \{w+2^n+x:0\leq x<2^{n}\}\\
		=&T_{w,n-1}\cup T_{w+2^n,n-1}.
	\end{align*}
	The two sets $T_{w,n-1}$ and $T_{w+2^n,n-1}$ look like\begin{align*}
		T_{w,n-1}&=\{(w_sw_{s-1}\cdots w_{n+1} 0 x_{n-1}\cdots x_{0})_2:x_i\in\{0,1\}\}\\
		T_{w+2^n,n-1}&=\{(w_sw_{s-1}\cdots w_{n+1} 1 x_{n-1}\cdots x_{0})_2:x_i\in\{0,1\}\},
	\end{align*}Hence the induced subgraph of $T_{w,n}$ is the union of the induced subgraphs of $T_{w,n-1},T_{w+2^n,n-1}$ and the edges between vertices in $T_{w,n-1}$ and vertices in $T_{w+2^n,n-1}$. We claim that there exists exactly one edge between two vertex sets $T_{w,n-1},T_{w+2^n,n-1}$.
	
	Assume there exists an edge $e'$ from $w'\in T_{w,n-1}$ to $w'+R(n',w')\in T_{w+2^n,n-1}$. Because by definition $\mu(R(n',w'))=n'<\lambda(w')$, if $n'<n$ then $w'+R(n',w')$ is still in $T_{w,n-1}$ and if $n'>n$, then $w'+R(n',w')$ is greater than all numbers in $T_{w+2^n,n-1}$. Hence $n'=n$. Again, because $n=n'<\lambda(w')$, $w'=w$ otherwise $w'$ is not in $T_{w,n-1}$. Hence such edge is unique and the claim is true.

%
%
%
	
	
	Now we show that all induced graphs of $T_{n,w}$ are trees by induction on $n$. For $n=0$ and every $w$ that $\lambda(w)>0$, $T_{w,0}$ has a single vertex. For $n>0$, by induction hypothesis, for any $w$ that $\lambda(w)>n$, $T_{w,n}$ is a tree because it is the union of two trees (induced graphs of $T_{w,n-1}$ and $T_{w+2^n,n-1}$) plus one edge between two trees. 
	
	Since $B^s=T_{2^s,s-1}$, the graph of $B^s$ is a tree.

	Definition of the coloring: Let $c(2^s)=0$. For another vertex $w$, because the graph is a tree, find the unique path $P$ from $2^{s}$ to $w$ and let $c(w)$ be the length of $P$ modulo 2. This definition is uniformly recursive in $R$.

	Verification: Let $P_w$ be a path from $2^s$ to $w$ and $x=R(n,w)$. If $w+x$ is a vertex in $P_w$, then it must be the vertex in $P_w$ exactly before $w$: otherwise there will be two paths from $w$ to $w+x$. So $c(w+x)\equiv|P_w|-1\!\!\mod2 \neq c(w)$.   If $w+x$ is not in the path, then the path $P_w$ plus $e_{w,w+x}$ is a path from $2^s$ to $w+x$, and $c(w+x)\equiv|P_w|+1\!\!\mod2 \neq c(w)$. In both cases, $c(w)\neq c(w+x)$, and this coloring satisfy the requirement of the lemma.
\end{proof}

	Once we have Lemma \ref{lem:main0}, our task becomes to construct a function $R$ satisfying the second condition C2. The construction will use approximations of $\Delta_3^0$ sets. By recursion theory \cite{Soare1999}, a set $A$ is $\Delta_3^0$ iff $A\leq_T \emptyset''$ and $A\leq_T B'$ iff there exists a function $\tilde{A}(x,k)\leq_T B$ such that $\lim_k \tilde{A}(x,k)=A$. More precisely, we will use the following lemma.

\begin{lemma}\label{prop:limit1}
	There exists a recursive list of total recursive functions $\{A_i(x,k,s)\}_{i\in\mathbb{N}}$ valued in $\{0,1\}$ such that for any $\Delta_3^0$ set $A$ and its characteristic function $A(x)$, there exists a function $A_i(x,k,s)$ in the list such that for any $x$, $$\lim_{y}\lim_s A_i(x,k,s)=A(x).$$
\end{lemma}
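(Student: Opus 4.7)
The plan is to combine the Shoenfield limit lemma, applied twice, with a standard ``patching'' trick for enumerating total recursive functions.

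First, I would show that every $\Delta_3^0$ set can be written as an iterated limit of some total recursive ternary function. Since $A\leq_T \emptyset''=(\emptyset')'$, Shoenfield's limit lemma gives a function $g(x,k)\leq_T \emptyset'$ with $\lim_k g(x,k)=A(x)$; a second application of the limit lemma to the $\emptyset'$-recursive function $g$ yields a total recursive $\{0,1\}$-valued $h(x,k,s)$ with $g(x,k)=\lim_s h(x,k,s)$. Thus $A(x)=\lim_k\lim_s h(x,k,s)$ for such a total recursive $h$.

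The obstacle is that totality of a recursive function is only $\Pi_2^0$, so there is no direct effective enumeration of total recursive ternary functions. I would handle this by taking a standard enumeration $\{\phi_i\}_{i\in\mathbb{N}}$ of all partial recursive ternary functions and defining a total surrogate
\[
A_i(x,k,s)=\phi_i(x,k,t(x,k,s)),
\]
where $t(x,k,s)$ is the largest $t'\leq s$ for which $\phi_i(x,k,t')$ halts within $s$ steps with output in $\{0,1\}$, with the convention $A_i(x,k,s)=0$ if no such $t'$ exists. Each $A_i$ is total, $\{0,1\}$-valued, and uniformly recursive in $i$, giving the desired recursive list.

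The remaining step, and the one most sensitive to error, is to check that this patching preserves the nested limits when $\phi_i$ is itself total and $\{0,1\}$-valued. For fixed $(x,k)$, every single computation $\phi_i(x,k,t')$ terminates in finitely many steps, so once $s$ is large enough to run $\phi_i(x,k,0),\ldots,\phi_i(x,k,M)$ all within $s$ steps one has $t(x,k,s)\geq M$; hence $t(x,k,s)\to\infty$ as $s\to\infty$. It follows that $\lim_s A_i(x,k,s)=\lim_{t'}\phi_i(x,k,t')$, and taking $\phi_i=h$ from the first step gives $\lim_k\lim_s A_i(x,k,s)=A(x)$. Using $\max$ in the definition of $t(x,k,s)$ is what rules out the pitfall of $A_i$ being stuck at the default value $0$ for slow-halting $\phi_i$.
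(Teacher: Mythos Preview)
Your proof is correct. It differs from the paper's in how the list is produced: the paper enumerates $\emptyset'$-oracle indices $j$ and sets $A_j(x,k,s)=\Phi_j^{\emptyset'[s]}(x,k)[s]$ (patched to $0$ when the output is not in $\{0,1\}$), using the use principle to argue that once the $\emptyset'$-use has stabilized, the inner limit in $s$ exists and equals $\tilde A(x,k)=\Phi_j^{\emptyset'}(x,k)$. You instead carry out the second application of the limit lemma explicitly to obtain a plain total recursive $h$, then enumerate unrelativized partial recursive ternary functions and repair totality with your $t(x,k,s)$ device. Both routes are standard; the paper's has the advantage that the trivial default-to-$0$ patch already yields the correct inner limit (since $\emptyset'[s]$ eventually agrees with $\emptyset'$ on the relevant use), whereas your enumeration of plain $\phi_i$ genuinely requires the $t$-trick to avoid the inner limit being corrupted on slow-halting inputs---a pitfall you correctly identified. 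Your approach buys a slightly more elementary presentation (no explicit oracle computations or use principle), at the cost of the extra bookkeeping with $t$.
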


\begin{proof}
	 If a set $A$ is $\Delta_3^0$,  then $A\leq_T \emptyset''$ and hence there exists a function $\tilde{A}(x,k)\leq_T \emptyset'$ such that $\lim_k \tilde{A}(x,k)=A(x)$. Let $\tilde{A}(x,k)=\Phi^{\emptyset'}_i(x,k)$, where $\Phi_i$ is the $i$ th recursive function with oracle. Consider $\Phi_i^{\emptyset'[s]}(x,k)[s]$ and we will show that for every $x$,  $\lim_{k}\lim_s \Phi_i^{\emptyset'[s]}(x,k)[s]$ exists and equals $A(x)$ .

	By definition, there exists $k_0$ such that for any $k>k_0$, $\tilde{A}(x,k)\downarrow=A(x)$. Now for a number $k>k_0$, we check if $\lim_s\Phi_i^{\emptyset'[s]}(x,k)[s]=\tilde{A}(x,k)$. Because $\tilde{A}(x,k)=\Phi_i^{\emptyset'}(x,k)$ is defined, there exists $s$ such that $\Phi^{\emptyset'[s]}(x,k)[s]\downarrow =\tilde{A}(x,k)$ and the usage of $\emptyset'$ in the computation will never be changed since stage $s$. Now we could see for any $s'>s$, this computation still holds and $\Phi^{\emptyset'[s']}(x,k)[s']=\tilde{A}(x,k)$. So $\lim_s\Phi^{\emptyset'[s]}(x,k)[s]=\tilde{A}(x,k)$. Hence $\lim_{k}\lim_s \Phi_i^{\emptyset'[s]}(x,k)[s]=A(x)$ .

	So we define the list as follows. For each natural number $j$,  define $A_j(x,k,s)=\Phi_j^{\emptyset'[s]}(x,k)[s]$ if it is $0$ or $1$, and $A_j(x,k,s)=0$ otherwise. This is a total recursive function and this list is also recursive.
\end{proof}

	\subsection*{Strategy of killing one set} 

Assuming $A$ is an infinite $\Delta_3^0$ set with weak apartness, we show how to kill $A$ by constructing the function $R$. If for this $R$, condition C2 holds, then, by Lemma \ref{lem:main0}, we can construct a coloring function $c$ that kills the set $A$.

Our strategy is to guess which number is in $A\cap B^n$ (by weak apartness, $A\cap B^n$ contains at most one element) and let $R(n,w)$ be this element. We can use its approximation $A(x,k,s)$ as a guess function.

  We use the guess function to construct the function $R(n,w)$. For numbers $n,w$, compute $A(x,\lambda(w),\mu(w))$ for all $x\in B^n$, and check if there exists any $x$ in $B^n$ such that $$A(x,\lambda(w),\mu(w))=1,$$ which suggests that $x\in B^n\cap A$; if such $x$ exists, then we guess $B^n\cap A\neq\emptyset$ and let $R(n,w)$ be the minimal one among all such $x$; if such $x$ does not exist, let $R(n,w)=2^{n+1}-1$, i.e.: $$R(n,w)=\min(\{x\in B^n:A(x,\lambda(w),\mu(w))=1\}\cup\{2^{n+1}-1\}).$$

	We show that this construction can kill set $A$. Because $A$ is infinite, there exists $n$ such that $B^n\cap A$ is nonempty. Let $x$ be the unique element in $B^n\cap A$. By Lemma \ref{prop:limit1}, for any number $y$ in $B^n$, $\lim_k\lim_s A(y,k,s)=A(y)$. Because $B^n$ is finite, 
	$\lim_k\lim_s (A(\cdot,k,s)\uparrow B^n)=A(\cdot)\uparrow B^n$. Therefore, for sufficiently large $k,s$, for any $y\in B^n\setminus\{x\}$, $A(y,k,s)=0$ and $A(x,k,s)=1$.

	Now, look for a number $K$ such that for any $k>K$, $\lim_s (A(\cdot,k,s)\uparrow B^n)=A(\cdot)\uparrow B^n$ and choose a number $w_1\in A$ such that $\lambda(w_1)>K$ and $n$; let $k=\lambda(w_1)$. Next, for the fixed number $k$, look for a number $s_k$ such that for any $s>s_k$, $A(\cdot,k,s)\uparrow B^n=A(\cdot)\uparrow B^n$ and choose another number $w_2\gg w_1$ from $A$ such that $\mu(w_2)>s_k$. 

Let $w=w_1+w_2$, then $\lambda(w)=\lambda(w_1)=k>K$ and $\mu(w)=\mu(w_2)>s_k$. Combining all discussions above, $A(\cdot,\lambda(w),\mu(w))\uparrow B^n=A(\cdot)\uparrow B^n$. This means $A(x,\lambda(w),\mu(w))=1$ while for any $y\in B^n\setminus\{x\}$, $A(y,\lambda(w),\mu(w))=0$. By the definition of function $R(n,w)$, $R(n,w_1+w_2)=x\in A$. All three numbers $x\ll w_1\ll w_2$ are in $A$.	By Lemma \ref{lem:main0}, there exists a recursive function $c$ such that $c(w_1+w_2)\neq c(x+w_1+w_2)$ where both $w_1+w_2$ and $x+w_1+w_2$ are in $FS(A)$.

	\subsection*{Kill infinitely many sets}

The strategy of killing infinitely many sets is based on the strategy of killing a single set.   Now, condition C2, as defined earlier, becomes: 	for any $\Delta_3^0$ infinite set $A_i$ with weak apartness, there exist numbers $w\in FS(A_i)$ and $n<\lambda(w)$ such that $R(n,w)\in A_i$. We will continue to use the construction of previous section with one more step: first of all, we need to choose a set $A_i$ among all $\Delta_3^0$ sets by some priority arguments, and then apply that construction. To do this, we first guess which $A_i$ have nonempty intersection with $B^n$, and then choose one such set among them. This argument needs the second guessing function guessing that: for a number $n$ and a set $A_i$, whether $A_i\cap B^n$ is empty or not. For a set $A_i$, a number $w$, and a number $n<\lambda(w)$, we check if there exists $x\in B^n$ such that $A_i(x,\lambda(w),\mu(w))=1$ and use a function $a_i(n,k,s)$ to denote this check: 

\begin{align*}
	a_i(n,k,s)=\max\{A_i(x,k,s):x\in B^n\}=\left\{
	\begin{array}{l}
		1, \text{ if }\exists x\in B^n\ A_i(x,k,s)=1 \\
		0, \text{ otherwise} \\
	\end{array}
	\right..
\end{align*}

	Because $B^n=\{x:\mu(x)=n\}$ is finite, this function is also recursive and total. If $a_i(n,\lambda(w),\mu(w))=1$, which means that such $x\in B^n$ that $A_i(x,\lambda(w),\mu(w))=1$ exists, then we guess $A_i\cap B^n$ is nonempty; otherwise, we guess $A_i\cap B^n$ is empty.

We cannot recursively list all infinite $\Delta_3^0$ sets, but by Proposition \ref{prop:limit1}, we have a recursive list of some total recursive functions $A_i(x,k,s)$ such that every $\Delta_3^0$ set is represented in the list. This list contains approximations of all infinite $\Delta_3^0$ sets. However, some functions in the list may not correspond to an infinite $\Delta_3^0$ set, a set with weak apartness or even a set. 

So we need to use some restrictions similar to Blass, Hirst and Simpson's paper. During the construction, for any function $A_i(x,k,s)$ and any pair $k,s$, we will choose at most $2^i$ many numbers $n\in(i,s)$ and call them the candidates of $A_i$ for $k,s$. For any $k,s$, any set $A_i$ has at most $2^i$ many candidates. When defining $R(n,w)$, only if $n$ is a candidate of $A_i$ for $k=\lambda(w),s=\mu(w)$, $A_i$ may be chosen to apply the construction of killing one set.

\subsection*{Construction} We use the recursive list $\{A_i(x,k,s)\}_{i\in\mathbb{N}}$ in Proposition \ref{prop:limit1}. First, for each $A_i(x,k,s)$ in the list, define  $a_i(n,k,s)=\max\{A_i(x,k,s):x\in B^n\}$ as mentioned before.

 Next, for each $k,s$ define a candidate set $C_i(k,s)$ to be the set of first $2^i$ numbers $n\in(i,s)$ such that $a_i(n,k,s)=1$. If there are not $2^i$ many such number $n$, then simply let $C_i(k,s)$ be all such $n$. These sets $C_i(k,s)$ are uniformly recursive in $i,k,s$.

Third, for each $n,w$, compute the set $\{i< n: n\in C_i(\lambda(w),\mu(w))\}$, the index set of sets $A_i$ that $C_i(\lambda(w),\mu(w))$ contains $n$, which means that we guess $A_i\cap B^n\neq \emptyset$.

Finally, let $j=\min (\{i< n: n\in C_i(\lambda(w),\mu(w))\}\cup\{n\}))$ and $$R(n,w)=\min(\{x\in B^n:A_j(x,\lambda(w),\mu(w))=1\}\cup\{2^{n+1}-1\})$$ as killing one set in previous subsection. This is the construction of the function $R(n,w)$. $R(n,w)$ is a recursive function.

By Lemma \ref{lem:main0}, we can construct a coloring function $c$ recursively in $R(n,w)$ and that coloring function $c$ satisfies the condition C1: for any $w$ and $n<\lambda(w)$, $c(w)\neq c(w+R(n,w))$. The only remaining task is verifying condition C2 for $R(n,w)$ and $c$.

	\subsection*{Verification}

In this subsection, we show that the above construction satisfies condition C2. By Proposition \ref{prop:limit1}, every $\Delta_3^0$ set is contained in the list, we only need to consider sets in the list. We use $A_i$ to denote the set represented by $A_i(x,k,s)$ if that set is defined.  First, we have the following lemma.

\begin{lemma}\label{lem:oldlem2}
	Let $C_i(k,s)$ be the function in our construction. For any infinite $\Delta_3^0$  set $A_i$ with weak apartness, there exists a finite set $C_i$ of size $2^i$ such that $\lim_k\lim_sC_i(k,s)\downarrow=C_i$.
\end{lemma}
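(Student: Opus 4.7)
The plan is to identify $C_i$ explicitly from $A_i$, and then show that the approximation $C_i(k,s)$ settles onto it under $\lim_k\lim_s$ by a two-level stabilization argument.

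Since $A_i$ has weak apartness, $A_i\cap B^n$ is either empty or a singleton; since $A_i$ is infinite, it meets $B^n$ for infinitely many $n$. We would define $C_i$ to be the set of the first $2^i$ many $n>i$ for which $A_i\cap B^n\neq\emptyset$, and set $N=\max C_i$. Then $|C_i|=2^i$ by construction, and this is the candidate target for the double limit.

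For each individual $n$, the set $B^n$ is finite, so the pointwise convergence $\lim_k\lim_s A_i(x,k,s)=A_i(x)$ supplied by Lemma \ref{prop:limit1} commutes with the maximum over $B^n$ and gives
$$\lim_k\lim_s a_i(n,k,s)=\max_{x\in B^n}A_i(x),$$
which equals $1$ precisely when $A_i\cap B^n\neq\emptyset$. To make this uniform over $n\in(i,N]$, we restrict attention to the finite set $X=\bigcup_{n\leq N}B^n$: taking the maximum over $x\in X$ of the individual thresholds supplied by Lemma \ref{prop:limit1} yields a single $K$ such that $\lim_s A_i(x,k,s)=A_i(x)$ for every $k>K$ and every $x\in X$. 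For each such fixed $k$, a further maximum over $X$ produces $S_k\geq N$ with $A_i(x,k,s)=A_i(x)$ for all $x\in X$ and all $s>S_k$. Hence for $k>K$ and $s>S_k$, the value $a_i(n,k,s)$ equals the correct indicator of $A_i\cap B^n\neq\emptyset$ for every $n\in(i,N]$ simultaneously, and the interval $(i,s)$ already contains all of $C_i$; so the first $2^i$ many $n\in(i,s)$ with $a_i(n,k,s)=1$ are exactly the elements of $C_i$, giving $C_i(k,s)=C_i$ and hence $\lim_k\lim_s C_i(k,s)\downarrow=C_i$.

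The main obstacle is really just the quantifier bookkeeping. Lemma \ref{prop:limit1} provides only pointwise convergence in $x$, so the threshold on $k$ a priori depends on $x$ and hence on $n$; finiteness of the initial segment $(i,N]$ together with finiteness of each $B^n$ is what permits taking a single maximum over the finite set $X$ to produce a uniform outer bound $K$, with the inner bound $S_k$ then allowed to depend on $k$, matching the order $\lim_k\lim_s$ in the conclusion. Correctness on all of $(i,N]$ (not only on $C_i$) is essential, since a spurious $1$ at some $n'\in(i,N]\setminus C_i$ would shift the ``first $2^i$ hits'' away from $C_i$; excluding such spurious values is exactly what the uniform stabilization on $X$ buys us.
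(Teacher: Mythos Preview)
Your proof is correct and follows essentially the same route as the paper: define $C_i$ as the first $2^i$ values $n>i$ with $A_i\cap B^n\neq\emptyset$, set $N=\max C_i$, and use finiteness of $B^{\leq N}$ (your $X$) to uniformize the double-limit convergence of $A_i(\cdot,k,s)$ so that $a_i(n,k,s)$ stabilizes correctly on all of $(i,N]$ simultaneously. If anything, you are slightly more explicit than the paper about the quantifier bookkeeping (the uniform $K$, the $k$-dependent $S_k$, and the need for $S_k\geq N$ so that the search interval $(i,s)$ covers $C_i$).
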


\begin{proof}
	Because $A_i$ is infinite and has weak apartness, there are infinitely many $n>i$ such that $A_i\cap B^n$ is nonempty. Let $N$ be the $2^i$ th number $n$ among them.

	By definition, for any $x$ in $B^{\leq N}$, $\lim_{k}\lim_s A_i(x,k,s)=A_i(x)$. Because $B^{\leq N}$ is finite, by properties of limits, we also have: $$\lim_{k}\lim_s (A_i(\cdot,k,s)\uparrow B^{\leq N})=A_i(\cdot)\uparrow B^{\leq N}$$

	Hence for any sufficiently large $k$, there exists $s_k$ such that for any $s>s_k$ and every $n\leq N$, $a_i(n,k,s)=\max\{A_i(x,k,s):x\in B^n\}=\max\{A(x):x\in B^n \}$. This means, for $n\leq N$, $a_i(n,k,s)=1$ iff $B^n\cap A_i$ is nonempty . Because $N$ is the $2^i$th number $n>i$ such that $B^n\cap A_i$ is nonempty, there exists $2^i$ numbers $n\leq N$ greater than $i$ such that $B^n\cap A_i$ is nonempty and hence $a_i(n,k,s)=1$. By definition, $C_i(k,s)$ is the set of first $2^i$ elements $n\in(i,s)$ such that $a_i(n,k,s)=1$; hence for those sufficiently large $k,s$, $C_i(k,s)$ is equal to the set of first $2^i$ number $n>i$ that $B^n\cap A_i\neq\emptyset$.

	So we can conclude that, for any sufficiently large $k$, there exists $s_k$ such that for any $s>s_k$, $C_i(k,s)$ exactly contains first $2^i$ number $n>i$ such that $B^n\cap A_i\neq\emptyset$.
	This is just $\lim_{k}\lim_s C_i(k,s)\downarrow=C_i$ for a fixed set $C_i$.
	\end{proof}

	Next, we have the following lemma:

\begin{lemma}\label{lem:c1s2:final2}
	Let $R(n,k)$ be the function in our construction. For any $\Delta_3^0$ infinite set $A_i$ with weak apartness, there exists a finite set $X_i\subset A_i$ such that: \begin{align*}
		\exists K\forall k>K\exists s_k\forall s>s_k\forall w\Big(\lambda(w)=k\wedge\mu(w)=s\longrightarrow
		\exists x\in X_i (R(\mu(x),w)=x)\Big).
	\end{align*}
	
\end{lemma}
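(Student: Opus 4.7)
The plan is to build $X_i$ directly from the stable candidate set $C_i$ given by Lemma \ref{lem:oldlem2}. Since $A_i$ has weak apartness, for each $n\in C_i$ the intersection $A_i\cap B^n$ has exactly one element; call it $x_n$, and set $X_i=\{x_n:n\in C_i\}$, a finite subset of $A_i$ of size $2^i$. The candidates for the witness in the conclusion will all come from this set, indexed by $n=\mu(x_n)$.

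The key observation is a counting one driven by the priority convention in the construction of $R$. Recall that $R(n,w)$ first computes $j=\min(\{i'<n:n\in C_{i'}(\lambda(w),\mu(w))\}\cup\{n\})$, so the goal is to locate some $n\in C_i$ at which this minimum equals $i$, rather than being seized by some smaller index $i'<i$ whose candidate set happens to contain $n$. By construction $|C_{i'}(k,s)|\leq 2^{i'}$ at every stage, so the total number of indices ``claimed'' by higher-priority sets at stage $(k,s)$ is bounded by $\sum_{i'<i}2^{i'}=2^i-1$. Since $|C_i|=2^i$, whenever $C_i(k,s)=C_i$ there must exist at least one $n\in C_i$ that lies outside $C_{i'}(k,s)$ for every $i'<i$. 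For such an $n$ the minimum is exactly $j=i$ (note that $n>i$ is automatic since $C_i(k,s)\subseteq(i,s)$).

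To finish, I would pick the thresholds $K$ and $s_k$ by combining two convergences. First, use Lemma \ref{lem:oldlem2} to choose $K_1$ so large that $\lim_s C_i(k,s)=C_i$ for all $k>K_1$. Second, apply Lemma \ref{prop:limit1} pointwise on the finite set $B^{\leq N}$ with $N=\max C_i$ to obtain $K_2$ such that for $k>K_2$ the approximation $A_i(\cdot,k,s)\!\uparrow\! B^{\leq N}$ stabilises to $A_i(\cdot)\!\uparrow\! B^{\leq N}$ as $s\to\infty$. Taking $K=\max(K_1,K_2)$ and choosing $s_k$ witnessing both stabilisations simultaneously, any $w$ with $\lambda(w)=k>K$ and $\mu(w)=s>s_k$ admits, by the counting argument, some $n\in C_i$ with $j=i$; the stabilisation of $A_i(\cdot,k,s)$ on $B^n$ then forces $R(n,w)=\min(\{x\in B^n:A_i(x,k,s)=1\}\cup\{2^{n+1}-1\})=x_n$, giving $R(\mu(x_n),w)=x_n\in X_i$.

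The main subtlety is that for $i'<i$ the sequence $C_{i'}(k,s)$ need not converge at all: the corresponding $A_{i'}$ may fail to be $\Delta_3^0$, to be infinite, or to have weak apartness, so we cannot freeze once and for all which indices are blocked by higher priorities. This is precisely why the argument has to be carried out at each fixed pair $(k,s)$ using only the uniform pointwise size bound $|C_{i'}(k,s)|\leq 2^{i'}$, and why $X_i$ is given the full size $2^i$: the extra slack guarantees that, regardless of the instantaneous shape of the higher-priority candidate sets, a fresh survivor $n\in C_i$ is always available to route $R(n,w)$ back into $X_i$.
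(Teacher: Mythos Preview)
Your proposal is correct and follows essentially the same approach as the paper: you take $X_i=\bigcup_{n\in C_i}(A_i\cap B^n)$, combine the two convergences $\lim_k\lim_s C_i(k,s)=C_i$ and $\lim_k\lim_s A_i(\cdot,k,s)\!\uparrow\! B^{\leq N}=A_i(\cdot)\!\uparrow\! B^{\leq N}$ to choose $K$ and $s_k$, and then use the pigeonhole count $\sum_{i'<i}|C_{i'}(k,s)|\le 2^i-1<2^i=|C_i|$ at each fixed $(k,s)$ to find a survivor $n\in C_i$ with $j=i$, giving $R(n,w)=x_n$. Your explicit remark that the higher-priority $C_{i'}(k,s)$ need not converge, so the counting must be done pointwise in $(k,s)$ using only the uniform size bound, is exactly the point behind the paper's argument.
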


\begin{proof}
	Consider limits $\lim_k\lim_sC_i(k,s)\downarrow=C_i$ and $\lim_{k}\lim_s A_i(\cdot,k,s)\uparrow B^{\leq N}=A_i(\cdot)\uparrow B^{\leq N}$ in the proof of Lemma \ref{lem:oldlem2} and let $X_i=\cup_{n\in C_i}B^n\cap A_i$. Using these two limits, there exists a number $K$ such that for any $k>K$, there exists a number $s_k$ such that for any $s>s_k$, $C_i(k,s)\downarrow=C_i$ and $A_i(\cdot,k,s)\uparrow B^{\leq N}=A_i(\cdot)\uparrow B^{\leq N}$. We show that if these two equations hold for numbers $k,s$, then for any $w$ such that $k=\lambda(w),s=\mu(w)$, there exists $x\in X_i$ such that $R(\mu(x),w)=x$ and hence this lemma is true.
	
	We claim that for these $k,s$, for at least one $n\in C_i(k,s)=C_i$,$$i=\min(\{j< n: n\in C_j(k,s)\}\cup\{n\})).$$ 
	
	By definition, for any $j<i$, $|C_j(k,s)|\leq 2^j$. So $|\cup_{j<i}C_j(k,s)|<2^i$.  If the claim is not true, then for every $n\in C_i(k,s)$, there exists $j<i$ that $n\in C_j(k,s)$. This means that $\cup_{j<i}C_j(k,s)\supseteq C_i(k,s)$, which is impossible because $|\cup_{j<i}C_j(k,s)|<2^i=|C_i(k,s)|$. So the claim is true.

	Let $n\in C_i$ be a number such that $i=\min(\{j<n: n\in C_j(k,s)\}\cup\{n\})).$ For any $w$ such that $\lambda(w)=k\wedge\mu(w)=s$, by definition, $$R(n,w)=\min(\{x\in B^n:A_i(x,\lambda(w),\mu(w))=1\}\cup\{2^{n+1}-1\}).$$ Because for $k,s$, $A_i(x,k,s)=A_i(x)$ for all $x\in B^n$, $\min\{x\in B^n: A_i(x,k,s)=1\}$ is defined and is just the unique element in $B^n\cap A_i$. Hence $R(n,w)$ is the unique element in $B^n\cap A_i$ and is in $X_i$.
	
	So for these $k,s$ that $C_i(k,s)\downarrow=C_i$ and $A_i(\cdot,k,s)\uparrow B^{\leq N}=A_i(\cdot)\uparrow B^{\leq N}$, for any $w$ such that $\lambda(w)=k,\mu(w)=s$, there exists $x\in X_i$ such that $R(\mu(x),w)=x$.  This finishes the proof.
	\end{proof}

	Now we can get the following result.

\begin{theorem}\label{prop:result0}
	There exists a recursive coloring function $c$ such that for any infinite $\Delta_3^0$ set $A_i$ with weak apartness, there exist $x\ll w_1\ll w_2$ in $A_i$ such that $c(w_1+w_2)\neq c(x+w_1+w_2)$. Hence $FS^{\leq 3}(A_i)$ is not monochromatic under $c$.
\end{theorem}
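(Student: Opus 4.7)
The plan is to chain together Lemma \ref{lem:main0} and Lemma \ref{lem:c1s2:final2}, using weak apartness to stage the selection of the witnesses. Let $c$ be the coloring produced by Lemma \ref{lem:main0} from the recursive function $R$ built above; this is my candidate coloring. Fix an arbitrary infinite $\Delta_3^0$ set $A_i$ with weak apartness, and let $X_i\subset A_i$ together with the threshold $K$ be as in Lemma \ref{lem:c1s2:final2}. Because $X_i$ is finite, $M := \max\{\mu(x) : x\in X_i\}$ is finite as well.

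Next I would use weak apartness to pick $w_1, w_2\in A_i$ in two stages. Weak apartness forces the $\lambda$-values of elements of $A_i$ to tend to infinity, so there is $w_1\in A_i$ with $\lambda(w_1)>\max(K,M)$; this both gives $w_1\gg X_i$ and $\lambda(w_1)>K$. Set $k := \lambda(w_1)$ and let $s_k$ be the bound supplied by Lemma \ref{lem:c1s2:final2} for this $k$. Now pick $w_2\in A_i$ with $w_2\gg w_1$ and $\mu(w_2)>s_k$; such $w_2$ exists because $A_i$ is infinite and, by weak apartness, contains elements with arbitrarily large $\lambda$-values, hence arbitrarily large $\mu$-values.

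Setting $w := w_1+w_2$, we obtain $\lambda(w)=\lambda(w_1)=k>K$ and $\mu(w)=\mu(w_2)>s_k$, so Lemma \ref{lem:c1s2:final2} delivers some $x\in X_i$ with $R(\mu(x),w)=x$. Writing $n := \mu(x)$, the relation $x\ll w_1$ gives $n<\lambda(w_1)=\lambda(w)$, so Lemma \ref{lem:main0} applies to the pair $(w,n)$ and yields
\begin{equation*}
c(w_1+w_2) \;=\; c(w) \;\neq\; c(w+R(n,w)) \;=\; c(x+w_1+w_2).
\end{equation*}
Since $x,w_1,w_2\in A_i$ with $x\ll w_1\ll w_2$, both $w_1+w_2$ and $x+w_1+w_2$ lie in $FS^{\leq 3}(A_i)$ but receive different colors under $c$, so $FS^{\leq 3}(A_i)$ is not monochromatic.

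The only step that is not purely mechanical is the two-stage choice of $w_1$ and $w_2$: we must first fix the outer limit parameter $k=\lambda(w)$ by choosing $w_1$, look up the inner bound $s_k$ that this $k$ determines, and only then pick $w_2$ large enough to push $\mu(w)$ past $s_k$. This is exactly what weak apartness is designed to allow, so beyond this bookkeeping I expect no substantive obstacle.
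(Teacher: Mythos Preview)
Your proposal is correct and follows essentially the same argument as the paper's own proof: invoke Lemma~\ref{lem:c1s2:final2} to obtain $X_i$ and $K$, pick $w_1\gg X_i$ in $A_i$ with $\lambda(w_1)>K$, then pick $w_2\gg w_1$ in $A_i$ with $\mu(w_2)>s_k$, and conclude via Lemma~\ref{lem:main0}. Your write-up is in fact slightly more explicit than the paper's (you spell out $M=\max\{\mu(x):x\in X_i\}$ and justify why weak apartness furnishes such $w_1,w_2$), but the structure and the two-stage selection of $w_1,w_2$ are identical.
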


\begin{proof}
	Choose the coloring function $c$ in our construction. For an infinite $\Delta_3^0$ set $A_i$ with weak apartness, we apply Lemma \ref{lem:c1s2:final2}  for $A_i$ and get $X_i\subseteq A_i$ , and $K$.
	
	Choose a sufficiently large number $w_1\gg X_i$ in $A_i$ such that $\lambda(w_1)>K$; let $k=\lambda(w_1)$. For this fixed $k$, find $s_k$ by Lemma \ref{lem:c1s2:final2} and choose a sufficiently large $w_2\gg w_1$ in $A_i$ such that $\mu(w_2)>s_k$; let $s=\mu(w_2)$. Because $w_2\gg w_1$, $\lambda(w_1+w_2)=\lambda(w_1)=k,\mu(w_1+w_2)=\mu(w_2)=s$. Hence by Lemma \ref{lem:c1s2:final2}, there exists a number $x\in X_i$ such that $R(\mu(x),w_1+w_2)=x$. By the construction and Lemma \ref{lem:main0}, $c(w_1+w_2)\neq c(x+w_1+w_2)$.
\end{proof}

	Theorem \ref{prop:result0} shows that there exists a recursive coloring function $c$ such that for any infinite $\Delta_3^0$ set $A$ with weak apartness, $FS^{\leq 3}(A)$ is not monochromatic. This theorem is close to our aim. We can use the following fact proved by Hindman:

\begin{lemma}[\cite{HINDMAN19741}]
	For any infinite set $A$, there exists an infinite set $B\leq_T A$ such that $B$ has apartness and $FS(B)\subseteq FS(A)$. 
\end{lemma}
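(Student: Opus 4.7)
The plan is to build $B = \{b_0 < b_1 < b_2 < \cdots\}$ greedily and recursively in $A$, ensuring that each $b_n$ is the sum of elements of some finite $F_n \subseteq A$, that the $F_n$ are pairwise disjoint, and that $\mu(b_n) < \lambda(b_{n+1})$. Disjointness of the $F_n$ immediately gives $FS(B) \subseteq FS(A)$, since for any finite $G \subseteq B$ we have $\sum_{b \in G} b = \sum_{x \in \bigcup_{b_i \in G} F_i} x$, which is a sum of distinct elements of $A$.

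The main step is the following key sublemma: \emph{For any infinite set $A' \subseteq \mathbb{Z}^+$ and any $N$, there exists a finite nonempty $F \subseteq A'$ such that $\lambda(\sum_{x \in F} x) > N$, and such an $F$ can be found uniformly in $A'$ and $N$.} To prove this, enumerate $A' = \{a_0 < a_1 < \cdots\}$ and consider the partial sums $s_j = a_0 + a_1 + \cdots + a_j$ modulo $2^{N+1}$. Since there are infinitely many $s_j$ but only $2^{N+1}$ residues, pigeonhole gives indices $i < j$ with $s_i \equiv s_j \pmod{2^{N+1}}$. Then $F = \{a_{i+1}, \ldots, a_j\} \subseteq A'$ satisfies $\sum_{x \in F} x = s_j - s_i \equiv 0 \pmod{2^{N+1}}$, so $\lambda(\sum_{x \in F} x) > N$. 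The search for such $i,j$ is computable in $A'$, so this step is uniformly $A'$-recursive.

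With the sublemma in hand, the construction is straightforward: pick any $b_0 \in A$ with $F_0 = \{b_0\}$; having defined $b_0, \ldots, b_n$ via disjoint finite sets $F_0, \ldots, F_n \subseteq A$, apply the sublemma to the infinite set $A' = A \setminus (F_0 \cup \cdots \cup F_n)$ with $N = \mu(b_n)$ to obtain a finite $F_{n+1} \subseteq A'$ with $\lambda(\sum_{x \in F_{n+1}} x) > \mu(b_n)$, and set $b_{n+1} = \sum_{x \in F_{n+1}} x$. The resulting $B = \{b_n : n \in \mathbb{N}\}$ is infinite, has apartness by construction, is computed from $A$ uniformly, and satisfies $FS(B) \subseteq FS(A)$ as noted above.

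The only point requiring some care is verifying that the construction is genuinely $A$-recursive: since $A$ is infinite, we have an $A$-computable enumeration of its elements, and the pigeonhole search in the sublemma terminates in finitely many steps (bounded by $2^{N+1}+1$ partial sums). Apart from this bookkeeping, the argument is a direct application of the pigeonhole principle and presents no essential obstacle.
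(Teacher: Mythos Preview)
The paper does not provide a proof of this lemma; it is simply quoted with a citation to Hindman's original paper. Your argument is correct and is the standard proof: the pigeonhole step on partial sums modulo $2^{N+1}$ is exactly how one manufactures a finite sum with arbitrarily large $\lambda$-value, and the greedy construction with disjoint $F_n$'s guarantees both apartness and $FS(B)\subseteq FS(A)$. The observation that the search terminates after at most $2^{N+1}+1$ partial sums and hence is uniformly $A$-recursive is the right justification for $B\leq_T A$.
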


So for the same coloring function $c$, if it has any infinite $\Delta_3^0$ set $A$ as a solution, then it can compute an infinite solution $B$  with apartness. Since $A$ is $\Delta_3^0$ and $B\leq_T A$, $B$ is also $\Delta_3^0$. However, Theorem \ref{prop:result0} implies that this is impossible. Now we have proved our aim of this section as follows:

	\begin{theorem}
	There exists a recursive coloring function $c$ without any $\Delta_3^0$ solution for Hindman's theorem. 
\end{theorem}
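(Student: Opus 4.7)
The plan is to take the recursive coloring $c$ constructed for Theorem \ref{prop:result0} and show that no infinite $\Delta_3^0$ set can serve as a full Hindman solution for it. Since Theorem \ref{prop:result0} already provides the hard analytic content, the argument is a short reduction from the general case to the case with apartness.

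Suppose for contradiction that some infinite $\Delta_3^0$ set $A$ is a solution for $c$, so that $FS(A)$ is monochromatic. Applying Hindman's lemma cited above, I would extract an infinite set $B\leq_T A$ with apartness such that $FS(B)\subseteq FS(A)$. Since $A$ is $\Delta_3^0$ and Turing reductions do not raise the $\Delta_3^0$ level, $B$ is also $\Delta_3^0$. The next observation is that apartness implies weak apartness: if $x<y$ are both in $B$, then $\mu(x)<\lambda(y)\leq\mu(y)$, so $x$ and $y$ lie in different $B^m$'s, and moreover $\lambda(x)\leq\mu(x)<\lambda(y)$, so no two elements of $B$ share a $\lambda$-value. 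Both conditions defining weak apartness are thus vacuously satisfied in the strongest possible way.

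Now $B$ is an infinite $\Delta_3^0$ set with weak apartness, and $FS(B)$ is monochromatic under $c$. But Theorem \ref{prop:result0} asserts that $FS^{\leq 3}(B)$ cannot be monochromatic under $c$, and $FS^{\leq 3}(B)\subseteq FS(B)$, yielding the desired contradiction.

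There is essentially no new obstacle here; the entire difficulty was packaged into the construction of $R(n,w)$ and the verification in Lemmas \ref{lem:oldlem2} and \ref{lem:c1s2:final2}. The only point requiring a moment of care is confirming that the chosen $B$ inherits the $\Delta_3^0$ property and the weak apartness condition from its construction, both of which are immediate from the definitions.
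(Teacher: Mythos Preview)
Your proposal is correct and follows exactly the same route as the paper: use the coloring from Theorem \ref{prop:result0}, pass from an arbitrary $\Delta_3^0$ solution $A$ to an infinite $B\leq_T A$ with apartness via Hindman's lemma, observe that $B$ remains $\Delta_3^0$, and derive a contradiction since $FS^{\leq 3}(B)\subseteq FS(B)\subseteq FS(A)$. Your added verification that apartness implies weak apartness is slightly more explicit than the paper, but otherwise the arguments coincide.
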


\section{Proof of $\Pi_3^0$ unsolvability with weak apartness}

\subsection*{General Preparation}To prove $\Pi_3$ case, we will use more functions similar to $R(n,w)$, denoted as $R,Q$. And we need a lemma generalizing Lemma \ref{lem:main0}.

\begin{lemma}\label{lem_main}
	Let $R(n,w)$ be a function with recursive domain such that $R(n,w)\in B^n$, and  $r>1$. There exists a coloring function $c:\mathbb{N}\rightarrow r$ uniformly  recursive in $R$ such that for any $w$ and any $n<\lambda(w)$, if $R(n,w)$ is defined, then:
	\begin{align*}
		\ c(w+R(n,w))\equiv c(w)+1 \mod r.
	\end{align*}		 
\end{lemma}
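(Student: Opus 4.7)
The plan is to follow the tree construction of Lemma~\ref{lem:main0} almost verbatim, replacing its parity argument with a signed count modulo $r$. First I will reuse the purely structural part of that proof: for each stage $s$, the graph on $B^s$ whose edges are $\{e_{w,w+R(n,w)} : w\in B^s,\ n<\lambda(w),\ R(n,w)\text{ defined}\}$ is a tree. That argument nowhere uses that there are only two colors, so I inherit the tree structure for free, and I again extend $R$ to all of $\mathbb{N}^2$ using recursiveness of its domain.

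Next I orient each edge $e_{w,w+R(n,w)}$ from $w$ to $w+R(n,w)$. This orientation is well-defined because $R(n,w)\in B^n$ is always positive, so any such edge runs from a strictly smaller endpoint to a strictly larger one, and the direction is forced by the endpoints themselves; in particular, the same unoriented edge cannot receive two opposite orientations from two different choices of $(w,n)$. Then I set $c(2^s)=0$ and, for any other $w\in B^s$, let $c(w)\in\mathbb{Z}/r\mathbb{Z}$ be the signed length of the unique tree path from $2^s$ to $w$, where each edge contributes $+1$ if traversed in its orientation and $-1$ otherwise.

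For verification, I fix $w$ with $n<\lambda(w)$, set $x=R(n,w)$, and split into the same two cases as in Lemma~\ref{lem:main0}. If $w+x$ lies on the path $P_w$ from $2^s$ to $w$, it must be the vertex immediately before $w$, and the edge $e_{w,w+x}$ is traversed in $P_w$ against its orientation, so $c(w)\equiv c(w+x)-1 \mod r$, i.e. $c(w+x)\equiv c(w)+1 \mod r$. Otherwise $P_w$ followed by $e_{w,w+x}$ (traversed in its orientation) is the unique path from $2^s$ to $w+x$, and again $c(w+x)\equiv c(w)+1 \mod r$. Either way the required congruence holds.

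The construction is visibly uniformly recursive in $R$, since one can recursively compute $B^s$, its edges, their orientations, and the signed count along a path. I do not anticipate any substantive obstacle: the only point that might look worrying is the consistency of the edge orientation under possibly overlapping constructions, but the positivity of $R(n,w)$ forces the orientation from endpoints alone and makes this automatic.
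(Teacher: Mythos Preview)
Your proposal is correct and follows essentially the same approach as the paper's proof: both reuse the tree structure from Lemma~\ref{lem:main0}, orient each edge $e_{w,w+R(n,w)}$ from $w$ toward $w+R(n,w)$, define $c(w)$ as the signed edge count along the unique path from $2^s$ to $w$ modulo $r$, and verify the congruence by the same two-case analysis. Your explicit remark that positivity of $R(n,w)$ forces the orientation from the endpoints alone is a small clarification the paper leaves implicit (relying instead on the no-multiple-edges observation in Lemma~\ref{lem:main0}), but the arguments are otherwise identical.
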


\begin{proof}
	This proof is very similar to the proof of Lemma \ref{lem:main0}. First, we extend the domain of $R$ to whole $\mathbb{N}^2$.
	
	For each number $s$, we create a graph with vertex set $B^s$ as in Lemma \ref{lem:main0}. As shown in Lemma \ref{lem:main0}, this graph is a tree, meaning that there exists a unique path from one vertex to another. 
	
	Definition of the coloring: Let $c(2^s)=0$. For another vertex $w$, because the graph is a tree, find the unique path $P$ from $2^{s}$ to $w$.  An edge $e_{w',w'+R(n',w')}$ in the path is considered a positive edge if the direction from $w'$ to $w'+R(n',w')$ is the same as the direction from $2^{s}$ to $w$; otherwise it is considered a negative edge. Then we count the number of positive edges, $a$ and the numbers of negative edges, $b$ and let the color of $P$ be $a-b\mod r$. This process is uniformly recursive in $R$.
	
	Verification: similar to Lemma \ref{lem:main0}, let $P_w$ be the unique path from $2^s$ to $w$, and let $x=R(n,w)$. If $w+x$ is a vertex in $P_w$, then it must be the vertex in $P_w$ exactly before $w$; and because the direction of edge $e_{w,w+x}$ is from $w$ to $w+x$, $P_w$ is $P_{w+x}$ adding a negative direction edge.  So $c(w)\equiv c(x+w)-1\mod r$. If $w+x$ is not in the path, then the path $P_w$ adding $e_{w,w+x}$ is a path from $2^s$ to $w+x$; because the direction of edge $e_{w,w+x}$ is from $w$ to $w+x$, $P_{w+x}$ is $P_{w}$ adding a positive direction edge.  So $c(w)+1\equiv c(x+w)\mod r$.
\end{proof}

We will use two special forms of this lemma. The first special form is when $r=2$, it returns to Lemma \ref{lem:main0}. The second special form is when the function $R(n,w)$ only depends on $\lambda(w)$, $\mu(w)$ in addition to $n$: for example, the function in the last section, $$R(n,w)=\min(\{x\in B^n:A_j(x,\lambda(w),\mu(w))=1\}\cup\{2^{n+1}-1\}).$$ We can see, for $w_1\neq w_2$, if $\lambda(w_1)=\lambda(w_2)$ and $\mu(w_1)=\mu(w_2)$, then $R(n,w_1)=R(n,w_2)$. In this case, we write this kind of functions in a form of three variables like $R(n,k,s),Q(n,k,s)$.

\begin{corollary}\label{col:pre1}
	Let $Q$ be a function from a recursive subset of $\mathbb{N}^3$ to $\mathbb{N}$ such that  $Q(n,k,s)\in B^n$ and $r>1$, there exists  a coloring function $c:\mathbb{N}\rightarrow r$ uniformly recursive in $Q$ such that for any $w$ and $n<\lambda(w)$, if $Q(n,\lambda(w),\mu(w))$ is defined, then: $$\ c(w+Q(n,\lambda(w),\mu(w)))\equiv c(w)+1 \mod r.$$
\end{corollary}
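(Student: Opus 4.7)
The plan is to obtain the corollary as an essentially immediate specialization of Lemma \ref{lem_main}. The three-variable function $Q(n,k,s)$ is really a function that depends on $w$ only through $\lambda(w)$ and $\mu(w)$, so I will repackage it as a two-variable function $R(n,w)$ and invoke the lemma directly.

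Concretely, first I would define $R(n,w):=Q(n,\lambda(w),\mu(w))$ on the set of pairs $(n,w)$ for which the right-hand side is defined. Since $\lambda$ and $\mu$ are computable and the domain of $Q$ is recursive by hypothesis, the domain of $R$ is a recursive subset of $\mathbb{N}^2$. Moreover $R(n,w)=Q(n,\lambda(w),\mu(w))\in B^n$ because $Q(n,k,s)\in B^n$ for every $(n,k,s)$ in the domain of $Q$. Hence $R$ satisfies all the hypotheses of Lemma \ref{lem_main}.

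Next I apply Lemma \ref{lem_main} to $R$ and the given $r>1$, obtaining a coloring function $c:\mathbb{N}\to r$ that is uniformly recursive in $R$ and satisfies $c(w+R(n,w))\equiv c(w)+1\pmod r$ whenever $n<\lambda(w)$ and $R(n,w)$ is defined. Since $R$ is uniformly recursive in $Q$ (the reduction is just composition with $\lambda,\mu$), $c$ is uniformly recursive in $Q$. Unfolding the definition of $R$ yields precisely $c(w+Q(n,\lambda(w),\mu(w)))\equiv c(w)+1\pmod r$ whenever the defining condition holds, which is the conclusion of the corollary.

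There is no genuine obstacle here: the entire content is the observation that a function depending on $(n,\lambda(w),\mu(w))$ is a special case of a function depending on $(n,w)$, and that recursiveness and the ``$\in B^n$'' property transfer trivially. The only thing to be careful about is the bookkeeping on domains, i.e.\ noting that the domain of $R$ inherits recursiveness from that of $Q$ via the computability of $\lambda$ and $\mu$, so that Lemma \ref{lem_main} is applicable without further extension of partial functions (or, if preferred, extending $R$ to all of $\mathbb{N}^2$ exactly as done at the start of the proof of Lemma \ref{lem_main}).
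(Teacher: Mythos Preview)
Your proposal is correct and matches the paper's intended approach exactly: the paper does not give a separate proof of the corollary, but the paragraph immediately preceding it explains that it is the special form of Lemma~\ref{lem_main} obtained when $R(n,w)$ depends on $w$ only through $\lambda(w)$ and $\mu(w)$, which is precisely your repackaging $R(n,w):=Q(n,\lambda(w),\mu(w))$. There is nothing more to add.
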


 Similar to last section, we need to choose a list approximation of $\Pi_3^0$ sets.
  By definition, every $\Pi_3^0$ set $A$ can be represented as $x\in A\leftrightarrow\forall y \Phi(x,y)$ by a $\Sigma_2^0$ formula $\Phi(x,y)$. By recursion theory \cite{Soare1999}, there exists a total recursive function $g(i,x,y)$ such that if  $\Phi_i(x,y)$ is the $i$th $\Sigma_2^0$ formula, then $\Phi_i(x,y)$ holds iff $W_{g(i,x,y)}$ is finite, which is also equivalent to that $\lim_{s} |W_{g(i,x,y),s}|$ is finite.

So we consider a function $$F(i,x,y,s)=\max\limits_{y'\leq y} |W_{g(i,x,y'),s}|,$$ the max value of sizes of $W_{g(i,x,y')}$ at stage $s$ for all $y'\leq y$. If $x\in A$, then because for any $y$, $W_{g(i,x,y)}$ is finite, $F(i,x,y,s)\leq \max_{y'\leq y} |W_{g(i,x,y'),s}|$ and hence $\lim_s F(i,x,y,s)$ exists and is finite; if $x\notin A$, then there exists $y_0$ such that $|W_{g(i,x,y_0)}|=\infty$ and $\lim_s |W_{g(i,x,y_0),s}|=\infty$, and so for any $y\geq y_0$, $\lim_s F(i,x,y,s)=\infty$. We conclude this by the following lemma:

\begin{lemma}\label{lem:list}
	There exists a total recursive function $F(i,x,y,s)$ such that:
	
	1.The function $F(i,x,y,s)$ is non-decreasing with respect to $y$ and $s$ separately. Therefore for any $i,x$ and $y$, the limit $\lim_s F(i,x,y,s)$ exists  and this limit is non-decreasing with respect to $y$.

	2. For each $i$, let $A_i=\{x:\forall y\ \lim_s F(i,x,y,s)\text{ is finite}\}$. 	Then $A_i$ is a $\Pi_3^0$ set.

3. For any $\Pi_3^0$ set $A$, there exists $i$ such that $A=A_i$ where $A_i$ is defined in 2.



%
%
%

\end{lemma}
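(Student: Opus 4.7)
The plan is to formalize the construction outlined in the paragraph preceding the lemma. Fix a standard recursive enumeration $\Phi_0,\Phi_1,\ldots$ of $\Sigma_2^0$ formulas in free variables $x,y$, together with the total recursive function $g(i,x,y)$ guaranteed by recursion theory for which $\Phi_i(x,y)$ holds iff $W_{g(i,x,y)}$ is finite. With $W_{e,s}$ the usual stage-$s$ approximation of $W_e$, define
$$F(i,x,y,s)=\max_{y'\leq y}|W_{g(i,x,y'),s}|.$$
Since $|W_{e,s}|$ is uniformly computable in $e,s$ and the max is over a uniformly bounded finite set, $F$ is total recursive.

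For clause 1, monotonicity in $s$ follows from $W_{e,s}\subseteq W_{e,s+1}$, and monotonicity in $y$ is immediate since enlarging $y$ enlarges the family over which the max is taken; the limit $\lim_s F(i,x,y,s)\in\mathbb{N}\cup\{\infty\}$ therefore exists and inherits monotonicity in $y$. For clause 2, the predicate ``$\lim_s F(i,x,y,s)$ is finite'' unfolds as $\exists n\,\forall s\,F(i,x,y,s)\leq n$, which is $\Sigma_2^0$ in $(i,x,y)$ because $F$ is recursive; prefixing $\forall y$ yields the $\Pi_3^0$ form $\forall y\,\exists n\,\forall s\,(F(i,x,y,s)\leq n)$, so each $A_i$ is uniformly $\Pi_3^0$.

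For clause 3, given a $\Pi_3^0$ set $A$ I would put it in normal form $x\in A\leftrightarrow \forall y\,\Phi(x,y)$ with $\Phi$ a $\Sigma_2^0$ formula, and pick $i$ with $\Phi=\Phi_i$. The key observation is that the finite maximum of monotone non-decreasing sequences commutes with the limit, so
$$\lim_s F(i,x,y,s)=\max_{y'\leq y}|W_{g(i,x,y')}|,$$
and this max is finite exactly when each $W_{g(i,x,y')}$ with $y'\leq y$ is finite. Taking $\forall y$ removes the restriction $y'\leq y$, giving $x\in A_i$ iff $\forall y\,W_{g(i,x,y)}$ is finite iff $\forall y\,\Phi_i(x,y)$ iff $x\in A$. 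The only step with any subtlety is this final chain: one must be careful that moving the max inside the limit is justified by monotonicity in $s$, and that the quantifier counting in clause 2 lands on $\Pi_3^0$ rather than $\Sigma_3^0$. Beyond that, the lemma is essentially a repackaging of the standard normal form for $\Pi_3^0$ predicates in terms of a single total recursive $F$, so no genuine combinatorial obstacle is expected.
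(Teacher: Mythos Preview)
Your proposal is correct and follows essentially the same route as the paper: the construction $F(i,x,y,s)=\max_{y'\leq y}|W_{g(i,x,y'),s}|$ is exactly what the paper uses in the paragraph preceding the lemma, and your verification of the three clauses matches the paper's reasoning (monotonicity from $W_{e,s}\subseteq W_{e,s+1}$ and the expanding max, the $\Pi_3^0$ count via $\forall y\,\exists n\,\forall s$, and the case split on whether all $W_{g(i,x,y)}$ are finite). There is nothing to add.
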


Therefore, the function $F(i,x,y,s)$ defines a list $\langle A_i\rangle_{i\in\mathbb{N}}$ of $\Pi^0_3$ sets that includes all $\Pi_3^0$ sets.

\subsection*{Strategy}

As in the case of  $\Delta_3^0$, to define the recursive coloring function $c$, we will construct a recursive function $R(n,w)$ and apply Lemma \ref{lem_main}. Similar to $\Delta_3^0$ case, the function $R(n,w)$ must satisfy the condition that, for any infinite $\Pi_3^0$ set $A_i$ with weak apartness, there exists $w\in FS(A_i)$ and $n\ll w$ such that $R(n,w)\in A_i$. Constructing such a function $R(n,w)$ is the key part of the strategy.

During the construction in the $\Delta_3^0$ case, for each $n$, we guess and choose a correct set $A_i$ by a priority argument such that $A_i\cap B^n\neq\emptyset$. We then try to construct a function $R(n,w)$ such that for sufficiently many $w$, $R(n,w)$ is an element in $A_i$. 


In $\Pi_3^0$ case, we will use new strategy. During the construction, for a fixed $n$, we guess and choose a specific set $A_i$ by a priority argument such that  $A_i\cap B^n\neq\emptyset$, and aim to make it so that the value of $R(n,w)$ can be any number in $B^n$ when $w$ goes through the set $FS(A_i)$. If so, there exists a number $w\in FS(A_i)$ such that $R(n,w)\in A_i$.

In other words, for a fixed $n$, we can regard $R(n,\cdot)$ as a coloring function from $w\in\mathbb{N}$ to one of $|B^n|$ many colors, a number $R(n,w)\in B^n$. Then our aim is to let $FS(A_i)$ includes all colors under the coloring function $R(n,\cdot)$. We can use Corollary \ref{col:pre1} to do this.

For each fixed $n$, to construct such a function $R(n,\cdot)$, we construct a recursive function $Q_n(y,k,s):\mathbb{N}^3\rightarrow \mathbb{N}$ such that $Q_n(y,k,s)\in B^y$ and later we will apply Corollary \ref{col:pre1} for $Q_n(y,k,s)$ and $r=|B^n|$ to get a function $R(n,\cdot)$. The construction of $Q_n(y,k,s)$ depends on the 
choice of the set $A_i$. If we choose a set $A_i$, then our aim is ensure that for most sufficiently large $y,k,s$, $Q_n(y,k,s)$ is in $A_i$.

In this way, we can find enough sufficiently large $w'\in A_i$ such that $w'=Q_n(y',k',s')$ for some $y',k',s'$. And later we are able to find an arbitrarily long sequence of such $w'$: $w_1\ll w_2\ll\dots\ll w_k\ll w_{k+1}\ll\dots\ll w_L$ all in $A_i$ and 
(because of Corollary \ref{col:pre1}) for any $1\leq k<L$\begin{equation*}
	R(n,w_k+w_{k+1}+\cdots+w_L)\equiv R(n,w_{k+1}+\dots+w_L)+1\mod r.
\end{equation*}Therefore, for different $k$, $R(n,w_k+\dots+w_L)$ has different values and if we choose $L>r$, then for any number $x\in B^n$, we can always find $k'$ such that: $R(n,w_{k'}+\dots+w_L)=x$. 


\subsection*{Preparation for construction} We need to guess two things: whether $B^n\cap A_i$ is empty or not and if $B^n\cap A_i$ is nonempty, which element is in $B^n\cap A_i$.

First, we guess the elements in $B^n\cap A_i$ if we assume it is nonempty. If $x\in B^n\cap A_i$, then for any $y$, $\lim_s F(i,x,y,s)$ is finite, and if $x\in B^n$ but $x\notin A_i$, then for almost all $y$, $\lim_s F(i,x,y,s)$ is infinite. So we define the guess function $x(i,n,y,s)$ to be the element $x\in B^n$ with minimal $F(i,x,y,s)$ value: $$x(i,n,y,s)=\min\{x\in B^n:\forall x'\in B^n F(i,x,y,s)\leq F(i,x',y,s)\}.$$

\begin{lemma}[Guessing a single set]\label{lem:single}
	Let $n$ be a natural number and $A_i$ be an infinite $\Pi_3^0$ set with weak apartness in the list such that $A_i\cap B^{n}$ is nonempty, then $\lim_{y}\lim_s x(i,n,y,s)$ exists and equals the unique element in $A_i\cap B^n$.
	
%
%
\end{lemma}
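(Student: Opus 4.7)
\medskip

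\noindent\textbf{Proof plan.} Since $A_i$ has weak apartness, $A_i\cap B^n$ contains at most one element; since it is nonempty, it contains exactly one element $x_0$. My plan is to show that for all sufficiently large $y$ the inner limit $\lim_s x(i,n,y,s)$ already equals $x_0$, from which the double limit statement follows immediately.

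\medskip

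\noindent\textbf{Step 1 (behaviour at $x_0$).} By the defining property of $A_i$ in part 2 of Lemma~\ref{lem:list}, for every $y$ the quantity $L(y):=\lim_s F(i,x_0,y,s)$ is finite. Because $F(i,x_0,y,\cdot)$ is non-decreasing in $s$, this means $F(i,x_0,y,s)\le L(y)$ for all $s$, so the sequence in $s$ is bounded.

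\medskip

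\noindent\textbf{Step 2 (behaviour off $x_0$).} For every $x\in B^n\setminus\{x_0\}$ we have $x\notin A_i$, so again by Lemma~\ref{lem:list}(2) there is some $y_x$ with $\lim_s F(i,x,y_x,s)=\infty$. Using the monotonicity of $F$ in $y$, the same divergence holds for all $y\ge y_x$. Let
\[
y_0=\max_{x\in B^n\setminus\{x_0\}} y_x,
\]
which is finite since $B^n$ is finite. Then for every $y\ge y_0$ and every $x\in B^n\setminus\{x_0\}$,
\[
\lim_s F(i,x,y,s)=\infty.
\]

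\medskip

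\noindent\textbf{Step 3 (the inner limit).} Fix $y\ge y_0$. Put $M=L(y)+1$. By Step 2, for each $x\in B^n\setminus\{x_0\}$ there is $s_x$ with $F(i,x,y,s)>M$ for all $s\ge s_x$; take $s(y)=\max_{x\ne x_0} s_x$. For $s\ge s(y)$, Step 1 gives $F(i,x_0,y,s)\le L(y)<M<F(i,x,y,s)$ for every $x\ne x_0$, so $x_0$ is the unique minimiser in the definition of $x(i,n,y,s)$, whence $x(i,n,y,s)=x_0$. Thus $\lim_s x(i,n,y,s)=x_0$ for every $y\ge y_0$, and consequently $\lim_y\lim_s x(i,n,y,s)=x_0$.

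\medskip

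\noindent\textbf{Main obstacle.} The argument is essentially bookkeeping about two nested limits; the only subtle point is that one must separate the roles of $y$ and $s$ correctly — the finiteness of $y_0$ hinges on $B^n$ being finite together with the monotonicity of $F$ in $y$, without which one could not uniformly bypass all the non-$A_i$ elements of $B^n$ by enlarging $y$.
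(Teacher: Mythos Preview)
Your proof is correct and follows essentially the same argument as the paper: both identify the unique $x_0\in A_i\cap B^n$, use the finiteness of $\lim_s F(i,x_0,y,s)$ versus the divergence of $\lim_s F(i,x,y,s)$ for $x\ne x_0$ at large $y$, and take a maximum over the finite set $B^n\setminus\{x_0\}$ to obtain a uniform threshold $y_0$. Your version is in fact slightly more explicit about the monotonicity in $s$ and the choice of the threshold $s(y)$ than the paper's, but the route is the same.
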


\begin{proof}
	Since $A_i\cap B^n\neq\emptyset$, we choose $x_0\in A_i\cap B^n$. Becasue $A_i$ has weak apartness, $B^n\setminus A_i=B^n\setminus\{x_0\}$. 
	So for any $x\in B^n\setminus\{x_0\}$ 
	there exists $y_x$ that $\lim_s F(i,x,y,s)=\infty$ for any $y>y_x$. Let $y_0$ be the largest one of all such $y_x$.
	
	For any $y>y_0$, there exists $k_y$ that $\lim_s F(i,x_0,y,s)=k_y$. 
	And for each $x\in B^n\setminus\{x_0\}$,  $ \lim_sF(i,x,y,s)>k_y$. So for any $y>y_0$ and any sufficiently large number $s$,  $x(i,n,y,s)=x_0$.
\end{proof}

Next we define a function to guess whether $A_i\cap B^n$ is empty or not.

\begin{lemma}
	For every set $A_i$ in the list, let \begin{equation*}
		B(i,n,y,s)=\min\{F(i,x,y,s):x\in B^n\}.
	\end{equation*} 
	
	Then if $A_i\cap B^n$ is nonempty, then $\forall y \lim_s B(i,n,y,s)<\infty$; and if $A_i\cap B^n$ is empty, then $\exists y_0\forall y>y_0 \lim_s B(i,n,y,s)=\infty$.
\end{lemma}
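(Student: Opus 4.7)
The plan is to first record that $B(i,n,y,s)$ is monotone non-decreasing in $s$, so its limit always exists in $\mathbb{N}\cup\{\infty\}$, and then to handle the two cases separately by comparing $B(i,n,y,s)$ to the functions $F(i,x,y,s)$ for individual $x\in B^n$. The monotonicity follows from the fact (given in Lemma \ref{lem:list}) that each $F(i,x,y,s)$ is non-decreasing in $s$, and the minimum of finitely many non-decreasing functions is non-decreasing.

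For the first case, suppose $A_i\cap B^n$ is nonempty and fix any $x_0\in A_i\cap B^n$. By the definition of $A_i$ in Lemma \ref{lem:list}, $\lim_s F(i,x_0,y,s)$ is finite for every $y$. Since $x_0\in B^n$, we have $B(i,n,y,s)\leq F(i,x_0,y,s)$ for every $s$, so
\begin{equation*}
\lim_s B(i,n,y,s)\;\leq\;\lim_s F(i,x_0,y,s)\;<\;\infty.
\end{equation*}

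For the second case, suppose $A_i\cap B^n$ is empty. Then for every $x\in B^n$ there exists some $y_x$ with $\lim_s F(i,x,y_x,s)=\infty$; by the monotonicity of $F$ in $y$ (again from Lemma \ref{lem:list}) the same holds for every $y\geq y_x$. Because $B^n$ is finite, I can set $y_0=\max\{y_x:x\in B^n\}$, so that for every $y>y_0$ and every $x\in B^n$, $\lim_s F(i,x,y,s)=\infty$. Given an arbitrary bound $M$ and such a $y$, for each $x\in B^n$ pick $s_x$ with $F(i,x,y,s)>M$ for all $s\geq s_x$; setting $s^*=\max_{x\in B^n}s_x$ (finite since $B^n$ is finite) yields $B(i,n,y,s)=\min_{x\in B^n}F(i,x,y,s)>M$ for all $s\geq s^*$. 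Hence $\lim_s B(i,n,y,s)=\infty$, completing the proof.

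There is essentially no obstacle here: the argument is just a routine commutation of $\min$ over a finite set with the monotone limit in $s$, combined with the defining property of $A_i$ from Lemma \ref{lem:list}. The only point that requires a little care is the direction $A_i\cap B^n=\emptyset\Rightarrow\lim_s\min=\infty$, which uses finiteness of $B^n$ to pass from pointwise divergence (for each $x$) to uniform divergence of the minimum.
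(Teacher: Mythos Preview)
Your proof is correct and is precisely what the paper intends: the paper's own proof is the one-line remark ``By Lemma \ref{lem:list} and properties of finite operations of limits,'' and you have simply unfolded those ``properties'' explicitly (monotonicity in $s$, comparing the min to a single witness $x_0$ in the nonempty case, and using finiteness of $B^n$ to pass to a uniform $y_0$ and $s^*$ in the empty case). There is nothing to add.
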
 

\begin{proof}
	By Lemma \ref{lem:list} and properties of finite operations of limits
\end{proof}

 We use the function $B(i,n,y,s)$ to guess whether $A_i\cap B^n$ is empty or not: for a number $k$, if $B(i,n,y,s)<k$, we guess the $A_i\cap B^n$ is nonempty with respect to $k$. This is the second guess function. 
 
 \subsection*{Construction}

We will define a function $I(n,y,k,s)$ which indicates index of the choice $A_i$  mentioned earlier.

\noindent\textbf{At stage $s>0$\ :} For all $n,i,y<s$, compute $B(i,n,y,s)$.

\textbf{At sub-stage $n\in(0,s)$:}

For all $y,k$ that $n<y\leq k\leq s$, do the following steps.

\hspace*{2ex}\textbf{Step 1:} Define $$I(n,y,k,s)=\min(\{i<n: B(i,n,y,s)<k \text{ and }\forall m<n\ I(m,y,k,s)\neq i\}\cup\{\infty\}).$$

\hspace*{2ex}\textbf{Step 2:} Define $Q_n(y,k,s)$: if $I(n,y,k,s)\neq \infty$, let $j=I(n,y,k,s)$, and let $$Q_n(y,k,s)=x(j,y,k,s);$$

\hspace*{2ex}otherwise let $Q_n(y,k,s)=2^{y}$.

\hspace*{2ex}\textbf{Step 3:} If $n+1<s$, go to next substage; otherwise end this stage.

\vspace*{1ex}

 For each $n$, the function $Q_n(y,k,s)$ is recursive with recursive domain. We can apply Corollary \ref{col:pre1} to $Q_n(y,k,s)$ and $r=|B^n|$ to obtain a function $R_0(n,\cdot)$ with range $\{0,\cdots,r-1\}$. Because $R(n,w)$ has a range $B^n$, we define $R(n,w)$ to be the $R_0(n,w)$th number in $B^n$.

 Because the sequence $\langle Q_n(y,k,s)\rangle_{n\in\mathbb{N}}$ is a recursive sequence and Corollary \ref{col:pre1} is uniformly recursive in $Q_n(y,k,s)$, $\langle R(n,\cdot)\rangle_{n\in\mathbb{N}}$ is a recursive sequence and the function $R(n,w)$ is recursive.


\subsection*{Verification, part 1}

In this subsection, we will show that for any infinite $\Pi_3^0$ set $A_i$, there exists $n$ such that $\lim_y\lim_k\lim_s I(n,y,k,s)=i$.

\begin{lemma}\label{lem:lim1}
	
 for any $n$, $\lim_y\lim_k\lim_s I(n,y,k,s)$ is defined.  Moreover, if we let $I(n)=\lim_y\lim_k\lim_s I(n,y,k,s)$, $H(n)=\{i<n:\exists m<n, I(m)=i\}$ and $Z(n)=\{i<n: \exists x\in B^n, x\in A_i\}$, then for any $n$, $I(n)=\min (Z(n)\setminus H(n)\cup\{\infty\})$. 
\end{lemma}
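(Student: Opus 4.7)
I will argue by strong induction on $n$. The base case $n=0$ is immediate: the set inside the $\min$ in the definition of $I(0,y,k,s)$ is $\{i<0:\ldots\}=\emptyset$, so $I(0,y,k,s)=\infty$ for all $y,k,s$, matching $\min(\emptyset\cup\{\infty\})=\infty$ on the right-hand side.

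For the inductive step, fix $n>0$ and assume the conclusion for every $m<n$, so that each $I(m)\in\mathbb{N}\cup\{\infty\}$ is already a fixed value. I have two sources of asymptotic information to feed into the definition of $I(n,y,k,s)$. First, the induction hypothesis: for every $m<n$, the iterated limit $\lim_y\lim_k\lim_s I(m,y,k,s)$ exists and equals $I(m)$, so $I(m,y,k,s)$ stabilises at $I(m)$ in the nested-tail sense. Second, the preceding lemma on $B(i,n,y,s)$: for every $i\in Z(n)$ the limit $\lim_s B(i,n,y,s)$ is finite for every $y$, while for every $i<n$ with $i\notin Z(n)$ there is a threshold $y_i$ beyond which $\lim_s B(i,n,y,s)=\infty$. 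Since the two index ranges $\{m<n\}$ and $\{i<n\}$ are finite, all thresholds from these sources can be combined by taking maxima.

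The construction of the thresholds proceeds in the order of the three limits. I pick $y^{*}$ so large that the induction-hypothesis stabilisation is active for every $m<n$ whenever $y\ge y^{*}$, and large enough that $\lim_s B(i,n,y,s)=\infty$ for every $i<n$ with $i\notin Z(n)$ whenever $y\ge y^{*}$. For each $y\ge y^{*}$ I pick $k^{*}_y$ strictly larger than $\lim_s B(i,n,y,s)$ for every $i\in Z(n)$, and larger than the $k$-threshold supplied by the induction hypothesis for each $m<n$ at this $y$. Finally, for each $y\ge y^{*}$, $k\ge k^{*}_y$, I pick $s^{*}_{y,k}$ so large that $B(i,n,y,s)<k$ for $i\in Z(n)$, $B(i,n,y,s)\ge k$ for $i<n$ with $i\notin Z(n)$, and $I(m,y,k,s)=I(m)$ for every $m<n$.

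For such $y,k,s$ the conditions defining membership in the set whose minimum is $I(n,y,k,s)$ reduce to $i\in Z(n)$ and $\forall m<n\ I(m)\ne i$, and the second condition excludes exactly $i\in H(n)$; hence that set is $Z(n)\setminus H(n)$. Thus $I(n,y,k,s)=\min((Z(n)\setminus H(n))\cup\{\infty\})$ throughout this tail, which both proves existence of the triple limit $I(n)$ and identifies its value. The only real obstacle is bookkeeping: keeping the three nested limits in the correct order $\lim_y\lim_k\lim_s$ and making sure that the maxima over the finite sets $\{m<n\}$ and $\{i<n\}$ do not disturb any of the stabilisations. Everything else is a direct consequence of the induction hypothesis together with the preceding lemma on $B(i,n,y,s)$.
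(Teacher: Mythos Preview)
Your proof is correct and follows essentially the same approach as the paper's: strong induction on $n$, splitting the defining set into the $B(i,n,y,s)<k$ condition (which stabilises to $Z(n)$) and the $\forall m<n\ I(m,y,k,s)\ne i$ condition (which, via the induction hypothesis, stabilises to the complement of $H(n)$). The paper phrases the argument by proving the two limit identities $\lim_y\lim_k\lim_s\{i<n:B(i,n,y,s)<k\}=Z(n)$ and $\lim_y\lim_k\lim_s\{i<n:\exists m<n,\,I(m,y,k,s)=i\}=H(n)$ and then appealing to ``properties of finite operations of limits'', whereas you make the threshold bookkeeping $y^{*},k^{*}_y,s^{*}_{y,k}$ explicit; these are the same proof written at different levels of detail.
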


\begin{proof}
	We prove this lemma by induction on $n$. For a number $n$, assume for any $m<n$, $I(m)$ is defined and $I(m)=\min Z(m)\setminus H(m)\cup\{\infty\}$. 
	
	By definition, for each $y,k,s$, $$I(n,y,k,s)=\min(\{i<n:B(i,n,y,s)<k,\forall m<n,I(m,y,k,s)\neq i\}\cup\{\infty\}).$$ The major part of the right-hand side is the set $$\{i<n:B(i,n,y,s)<k,\forall m<n,I(m,y,k,s)\neq i\},$$ which is the set $\{i<n:B(i,n,y,s)<k\}$ minus $\{i<n:\exists m<n,I(m,y,k,s)=i\}$. We claim that \begin{equation}
		\lim_y\lim_k\lim_s \{i<n:B(i,n,y,s)<k\}=Z(n)
	\end{equation} and \begin{equation}
		\lim_y\lim_k\lim_s \{i<n:\exists  m<n,I(m,y,k,s)=i\}=H(n)
	\end{equation}

	To prove equation (3.1), we begin with the definition of $Z(n)$ and $B(i,n,y,s)$. By properties of finite operations of limits, there exists $y_1$ such that for any $y>y_1$ and any $k$, there exists $s'_{y,k}$ such that for any $s>s'_{k,y}$ and any $i\in n\setminus Z(n)$, $B(i,n,y,s)>k$. Similarly, for any $y$ there exists $k''_y$ such that for any $s$ and any $i\in Z(n)$, $B(i,n,y,s)<k''_y$.  Hence: \begin{align*}
		\lim_y\lim_k\lim_s \{i<n:B(i,n,y,s)<k\}&\subseteq Z(n)\\ \lim_y\lim_k\lim_s \{i<n:B(i,n,y,s)<k\}&\supseteq Z(n).
	\end{align*} Equation (3.1) is true.
	
	We will now prove equation (3.2). By the Induction Hypothesis, for any $m<n$, $I(m)$ is defined. Using properties of finite operation of limits, we can find $y_0$ such that for any $y>y_0$, there exists $k_y$ such that any $k>k_y$, there exists $s_{y,k}$ such that for any $s>s_{y,k}$ and any $m<n$, $I(m,y,k,s)=I(m)$. Thus, we have $$\{i<n:\exists  m<n,I(m,y,k,s)=i\}=\{i<n:\exists  m<n,I(m)=i\}.$$ The right-hand side of this equation is equal to $H(n)$. Therefore, we conclude that  $\lim_y\lim_k\lim_s \{i<n:\exists  m<n,I(m,y,k,s)=i\}=H(n)$.

	Since equations (3.1)(3.2) hold, by properties of finite operations of limit, \begin{align*}
		&I(n)=\lim_y\lim_k\lim_s\ I(n,y,k,s)\\
		=&\lim_y\lim_k\lim_s\ \min(\{i<n:B(i,n,y,s)<k,\forall m<n,I(m,y,k,s)\neq i\}\cup\{\infty\})\\
		=&\lim_y\lim_k\lim_s\ \min(\{i<n:B(i,n,y,s)<k\}\setminus\{i<n:\exists m<n,I(m,y,k,s)=i\}\cup\{\infty\})\\
		=&\min(Z(n)\setminus H(n)\cup\{\infty\}).
	\end{align*}
	This is what we want to show.
\end{proof}

\begin{lemma}\label{lem:lim2}
	For any infinite $\Pi_3^0$ set  $A_i$ in our list, there exists $n$ such that $I(n)=i$.
\end{lemma}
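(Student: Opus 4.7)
The plan is to argue by contradiction, using the characterization $I(n) = \min(Z(n) \setminus H(n) \cup \{\infty\})$ from Lemma \ref{lem:lim1} together with the simple but crucial fact that $I$ takes each finite value at most once.

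First I would establish this injectivity. Suppose $I(n_1) = I(n_2) = j < \infty$ with $n_1 < n_2$. Then $j = I(n_1)$ means $j \in Z(n_1) \setminus H(n_1)$, and since $n_1 < n_2$ the definition of $H$ gives $j \in H(n_2)$, contradicting $j \in Z(n_2) \setminus H(n_2)$. Hence the restriction of $I$ to $\{n : I(n) < \infty\}$ is injective, and in particular $\#\{n : I(n) < i\} \leq i$ for our fixed $i$.

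Next, using that $A_i$ is infinite and has weak apartness, each $B^n$ meets $A_i$ in at most one element, so $A_i$ being infinite forces infinitely many $n$ with $A_i \cap B^n \neq \emptyset$, i.e.\ infinitely many $n > i$ with $i \in Z(n)$. Now suppose for contradiction that $I(n) \neq i$ for all $n$. Fix any such $n > i$ with $i \in Z(n)$ and consider two cases. If $i \in H(n)$, then by definition some $m < n$ satisfies $I(m) = i$, directly contradicting the assumption. Otherwise $i \in Z(n) \setminus H(n)$, so by the formula $I(n) \leq i$; combined with $I(n) \neq i$ this forces $I(n) < i$.

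Thus every $n$ with $i \in Z(n)$ would satisfy $I(n) < i$. But there are infinitely many such $n$, while by the injectivity step the set of $n$ with $I(n) < i$ has at most $i$ elements. This contradiction shows that some $n$ must have $I(n) = i$, as desired. The main obstacle is really the injectivity observation; once that is in hand, the rest is a pigeonhole argument combined with the weak apartness of $A_i$. I anticipate no serious calculation, only the bookkeeping of indices $m < n$ and the careful use of Lemma \ref{lem:lim1}.
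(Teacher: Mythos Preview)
Your argument is correct and takes a genuinely different route from the paper's. The paper proceeds by induction on $i$: assuming the result for all $j<i$ with $A_j$ infinite, it locates a single $N$ large enough that $A_i\cap B^N\neq\emptyset$ and, for each $j<i$, either $A_j\cap B^N=\emptyset$ (when $A_j$ is finite) or $j\in H(N)$ (via the induction hypothesis), which forces $i=\min(Z(N)\setminus H(N)\cup\{\infty\})=I(N)$. Your approach instead replaces both the induction and the finite/infinite case split on the $A_j$ with the single injectivity observation that $I$ hits each natural number at most once, and then a pigeonhole: infinitely many $n>i$ have $i\in Z(n)$, yet at most $i$ values of $n$ can satisfy $I(n)<i$, so one of them must give $I(n)=i$. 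This is arguably cleaner, since it needs no information at all about the other sets $A_j$ in the list.

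One small correction: the lemma as stated does \emph{not} assume weak apartness, and you do not need it. Each $B^n$ is finite (it has $2^n$ elements), so any infinite $A_i$ automatically meets infinitely many $B^n$, regardless of apartness. Simply replace your sentence invoking weak apartness by this observation; the remainder of your proof is unchanged.
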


\begin{proof}
We prove this lemma by induction on $i$. For each $i$, if $A_i$ is infinite, there exist infinitely many $n$ such that $A_i\cap B^n\neq \emptyset$. For any $j<i$, if $A_j$ is finite, there exists $n_j$ such that for any $n>n_j$, $A_j\cap B^n=\emptyset$; and by induction hypothesis, for any $j<i$, if $A_j$ is infinite, there exists $n_j$ such that $I(m)=j$. So we choose a number $N$ sufficiently large such that $A_i\cap B^N$ is nonempty and for any $j<i$, either $A_j\cap B^N=\emptyset$ or $\exists m<N, I(m)=j$.

	If there exists $n<N$ such that $I(n)=i$, the we are done. Otherwise by the definition of $H(N)$, we have $i\notin H(N)$; since $B^n\cap A_i\neq \emptyset$, we have $i\in Z(N)$.  Therefore $i\in Z(N)\setminus H(N)$. For any $j<i$, if $A_j\cap B^N=\emptyset$, then $j\notin Z(N)$; if $\exists m<N, I(m)=j$ then $j\in H(N)$. Thus for any $j<i$, we have $j\notin Z(N)\setminus H(N)$. Therefore $i=\min(Z(N)\setminus H(N)\cup\{\infty\})$ and by Lemma \ref{lem:lim2}, $I(N)=i$.
	
Therefore, there exists $n\leq N$ such that $I(n)=i$.
\end{proof}

\subsection*{Verification: part 2}

In this subsection, we will finish the verification.

\begin{lemma}\label{lem:final1}
	Let $A_i$ be a $\Pi_3^0$ set with weak apartness and $n$ be a number such that $I(n)=i$. Then for any $r>1$ and $Y$,  there exists a sequence $x_1\ll x_2\ll\cdots\ll x_r$ all in $A_i$ such that:
	
	(1) $\lambda(x_1)>Y$.
	
	(2) for any $g\in(0,r)$, $\lim_sQ_n(\mu(x_{g}),\lambda(x_{g+1}),s)=x_{g}.$
\end{lemma}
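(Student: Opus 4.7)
The plan is to build the $x_g$'s one at a time, using the hypothesis $I(n)=i$ together with Lemma \ref{lem:single} to force the desired behaviour of $Q_n$ at each step.

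First, apply Lemma \ref{lem:lim1} to the hypothesis $I(n)=i$: this gives $\lim_y\lim_k\lim_s I(n,y,k,s)=i$, so we may fix $Y_0>Y$ large enough that for every $y>Y_0$ the inner double limit $\lim_k\lim_s I(n,y,k,s)=i$ holds. Since $A_i$ is infinite with weak apartness, there are infinitely many $m$ with $A_i\cap B^m\neq\emptyset$, and weak apartness furthermore forces the $\lambda$-values of elements of $A_i$ to tend to infinity; in particular, for every $L$ the set $\{x\in A_i:\lambda(x)>L,\;\mu(x)>Y_0\}$ is infinite. This is the pool from which the $x_g$'s will be drawn.

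Next, construct $x_1\ll x_2\ll\cdots\ll x_r$ in $A_i$ recursively. For the base, pick $x_1\in A_i$ with $\lambda(x_1)>Y$ and $\mu(x_1)>Y_0$. For the inductive step, suppose $x_1\ll\cdots\ll x_g$ have been chosen in $A_i$ with $\mu(x_h)>Y_0$ for every $h\le g$, and with condition (2) already established for indices $1,\dots,g-1$. Set $y_g=\mu(x_g)$. By weak apartness, $A_i\cap B^{y_g}=\{x_g\}$, so we may apply Lemma \ref{lem:single} with the parameter $n$ of that lemma taken to be $y_g$, obtaining $\lim_k\lim_s x(i,y_g,k,s)=x_g$. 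Combining this with $y_g>Y_0$, which gives $\lim_k\lim_s I(n,y_g,k,s)=i$, pick $K_g$ such that for every $k>K_g$ both $\lim_s I(n,y_g,k,s)=i$ and $\lim_s x(i,y_g,k,s)=x_g$ hold. Now choose $x_{g+1}\in A_i$ with $\mu(x_{g+1})>Y_0$ and $\lambda(x_{g+1})>\max(K_g,y_g)$; such $x_{g+1}$ exists by the remark above.

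Finally, verify the two conclusions. Condition (1) is immediate, and $x_1\ll x_2\ll\cdots\ll x_r$ follows from $\lambda(x_{g+1})>\mu(x_g)$ at every step. For condition (2), fix $g\in(0,r)$, set $y=\mu(x_g)$ and $k=\lambda(x_{g+1})$. Since $k>K_g$, we have $\lim_s I(n,y,k,s)=i$, so eventually $I(n,y,k,s)=i\neq\infty$, and hence by construction $Q_n(y,k,s)=x(i,y,k,s)$ for all sufficiently large $s$; the choice of $K_g$ then gives $\lim_s Q_n(y,k,s)=x_g$.

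The main delicate point is that the value of $Q_n(y,k,s)$ depends on \emph{two} approximating guesses — the index guess $I(n,y,k,s)$ that selects which $A_j$ is being tracked, and the element guess $x(j,y,k,s)$ that selects which element of $B^y$ is proposed. Both must stabilise to the correct value for the same pair $(y,k)=(\mu(x_g),\lambda(x_{g+1}))$. The construction handles this by first fixing $\mu(x_g)>Y_0$ (to enable the first limit), and only then picking $\lambda(x_{g+1})$ large enough to stabilise both limits simultaneously; this is why the sequence must be built forward rather than chosen all at once.
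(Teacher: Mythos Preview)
Your proof is correct and follows essentially the same approach as the paper's: both build the sequence $x_1\ll\cdots\ll x_r$ one element at a time (the paper phrases it as induction on $r$, you as a direct recursive construction), choosing each $x_{g+1}$ with $\lambda(x_{g+1})$ large enough that both the index guess $I(n,\mu(x_g),\cdot,\cdot)$ and the element guess $x(i,\mu(x_g),\cdot,\cdot)$ have stabilised in the middle limit. Your handling of the threshold $Y_0$ (coming from the outer limit in $I(n)=\lim_y\lim_k\lim_s I(n,y,k,s)=i$) and the explicit simultaneous choice of $K_g$ is in fact somewhat cleaner than the paper's exposition.
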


\begin{proof}
	Because $I(n)=i$, $\lim_y\lim_k\lim_s I(n,y,k,s)=i$. We will prove this lemma by induction on $r$.

	Because $A_i$ has weak apartness, by Lemma \ref{lem:single}, there exists $Y_0$ such that for any  $Y_1>Y_0$ such that $B^{Y_1}\cap A_i$ is nonempty,  $B^{Y_1}\cap A_i$ contains exactly one element and therefore $\lim_{k}\lim_s x(i,{Y_1},k,s)\in A_i$. Without loss of generality, we assume that $Y>Y_0$. Because $\lim_k\lim_s I(n,Y,k,s)=i$,  $\lim_{k}\lim_s Q_n(Y,k,s)=\lim_{k}\lim_s x(i,Y,k,s)$ is defined and equal to the unique element $x\in A_i\cap B^Y$. 
	
	For the case $r=2$: in this case, $g$ can only be $1$. We choose $x_1\in A_i$ such that $\lambda(x_1)>Y$. Since $ A_i\cap B^{\mu(x_1)}$ contains $x_2$, it is nonempty, and thus $\lim_k\lim_s Q(\mu(x_1),k,s)=x_1$. Then we  choose $k_0$ such that
	for any $k>k_0$, $\lim_s Q(\mu(x_1),k,s)=x_1$ and choose a number $x_2\gg x_1$ in $A_i$ such that $\lambda(x_2)>k_0$. Then \(\lim_s Q(\mu(x_1),\lambda(x_2),s)=x_1\). It follows that two numbers $x_2\gg x_1$ satisfy our requirements.

	For the case $r+1>2$, by the Induction Hypothesis, for $Y$ and $r$, there exists a sequence $x_1\ll\cdots\ll x_r$ that satisfies the lemma for $Y,r$: $$\forall g\in(0,r), \lim_sQ(\mu(x_{g}),\lambda(x_{g+1}),s)=x_{g},$$ and $\lambda(x_1)>Y$. We need to find $x_{r+1}$ such that $\lim_s Q(\mu(x_r),\lambda(x_{r+1}),s)=x_r$. Since $B^{\mu(x_r)}\cap A_i$ is nonempty, $\lim_{k}\lim_s Q_n(\mu(x_r),k,s)=x_r$; therefore we only need to choose  $x_{r+1}$ such that $\lambda(x_{r+1})$ is sufficiently large. Choose a sufficiently large $x_{r+1}\gg x_{r}$ in $A_i$ such that $\lim_s Q_n(\mu(x_r),\lambda(x_{r+1}),s)=x_r$. Now the sequence $x_1\ll x_2\cdots\ll x_{r+1}$ satisfies the lemma.
\end{proof}

\begin{lemma}\label{lem:final3}
	Let $A_i$ be an infinite $\Pi_3^0$ set with weak apartness, $n$ be a natural number such that $I(n)=i$ and $R(n,w)$ be the function constructed. For any $r>1$, there exist $r$ different numbers $w_j, j<r$ in $FS(A_i)$ such that:
	
	(1) for any $w_j$, $\lambda(w_j)>n$;
	
	(2) for any two different $w_{j_1}$ and $w_{j_2}$, $R(n,w_{j_1})\neq R(n,w_{j_2})$.
\end{lemma}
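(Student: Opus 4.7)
The plan is to construct the $w_j$ as telescoping sums of a sequence $x_1\ll x_2\ll\cdots\ll x_r\ll x_\ast$ in $A_i$, chosen so that Corollary \ref{col:pre1} forces $R_0(n,w_j)\equiv R_0(n,w_{j+1})+1\pmod{|B^n|}$ along consecutive indices. This will yield pairwise distinct values $R(n,w_j)$ provided $r\leq |B^n|$, which is the relevant regime (if one wanted more than $|B^n|$ distinct values it would be impossible anyway, since $R(n,\cdot)$ takes values in $B^n$).

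First I would invoke Lemma \ref{lem:final1} with parameters $n$, $Y=n$, and length $r$ to obtain $x_1\ll\cdots\ll x_r$ in $A_i$ with $\lambda(x_1)>n$ and $\lim_s Q_n(\mu(x_g),\lambda(x_{g+1}),s)=x_g$ for every $g\in\{1,\dots,r-1\}$. For each such $g$ pick a stabilization stage $s_g$ with $Q_n(\mu(x_g),\lambda(x_{g+1}),s)=x_g$ for all $s\geq s_g$, and set $S=\max_g s_g$. Next I would select an anchor element $x_\ast\in A_i$ with $x_\ast\gg x_r$ and $\mu(x_\ast)\geq S$: such $x_\ast$ exists because weak apartness forces only finitely many members of $A_i$ to have $\lambda\leq\mu(x_r)$, so infinitely many $x\in A_i$ satisfy $\lambda(x)>\mu(x_r)$, and among those the $\mu$-values are necessarily unbounded. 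Define $w_j=x_j+x_{j+1}+\cdots+x_r+x_\ast$ for $j=1,\dots,r$.

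Verifying the conclusion is then routine. Each $w_j\in FS(A_i)$, and because $x_j\ll x_{j+1}\ll\cdots\ll x_\ast$ we have $\lambda(w_j)=\lambda(x_j)\geq\lambda(x_1)>n$, which is (1). For (2), note $w_j=w_{j+1}+x_j$ with $\lambda(w_{j+1})=\lambda(x_{j+1})>\mu(x_j)$ and $\mu(w_{j+1})=\mu(x_\ast)\geq S\geq s_j$, so
\[
Q_n\bigl(\mu(x_j),\lambda(w_{j+1}),\mu(w_{j+1})\bigr)\;=\;Q_n\bigl(\mu(x_j),\lambda(x_{j+1}),\mu(x_\ast)\bigr)\;=\;x_j.
\]
Corollary \ref{col:pre1} applied with modulus $|B^n|$ then yields $R_0(n,w_j)\equiv R_0(n,w_{j+1})+1\pmod{|B^n|}$, so iterating shows $R_0(n,w_1),\dots,R_0(n,w_r)$ occupy $r$ consecutive residues modulo $|B^n|$ and are therefore pairwise distinct; since $R(n,w)$ is determined by $R_0(n,w)$, the values $R(n,w_j)$ are distinct as well.

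I expect the main obstacle to be precisely bridging the gap between the limit-form conclusion of Lemma \ref{lem:final1} and the pointwise equality at the specific stage $s=\mu(w_{j+1})$ demanded by Corollary \ref{col:pre1}: the limits $\lim_s Q_n(\mu(x_g),\lambda(x_{g+1}),s)=x_g$ come with stabilization stages $s_g$ that we do not control in advance, and a naive choice of $\mu(w_{j+1})=\mu(x_r)$ need not exceed all $s_g$ at once. Appending the single extra term $x_\ast$ to push $\mu(w_{j+1})$ simultaneously past every $s_g$ closes this gap, and it is exactly in the construction of $x_\ast$ that both the infinitude and the weak apartness of $A_i$ are used.
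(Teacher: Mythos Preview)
Your proof is correct and follows essentially the same approach as the paper: apply Lemma~\ref{lem:final1}, append one extra anchor element to push the common $\mu$-value past all stabilization stages, form the telescoping sums, and read off the $+1\pmod{|B^n|}$ increment from Corollary~\ref{col:pre1}. Your $x_\ast$ is the paper's $x_{r+1}$, and your $w_j$ are the paper's $w_h$. If anything you are more careful than the paper: you explicitly take $Y=n$ in Lemma~\ref{lem:final1} to secure condition~(1), you spell out the existence argument for $x_\ast$ via weak apartness, and you flag that the conclusion only makes sense for $r\leq|B^n|$ (the paper silently overloads the symbol $r$ for both the free parameter and the modulus $|B^n|$).
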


\begin{proof}
	By Lemma \ref{lem:final1}, there exists a sequence $\langle x_g\rangle_{0<g\leq r}$ such that,  $$\forall g\in(0,r), \lim_sQ(\mu(x_{g}),\lambda(x_{g+1}),s)=x_{g}.$$ 
	
	Choose a sufficiently large $x_{r+1}\in A_i$ such that $x_{r+1}\gg x_r$ and $$\forall g\in(0,r), Q(\mu(x_{g}),\lambda(x_{g+1}),\mu(x_{r+1}))=x_{g}.$$
	
	For $ h\in(0,r]$, let $w_h=\sum_{g=h}^{r+1}x_g$. We show that for $0< h_2<h_1\leq r$, $R(n,w_{h_1})\neq R(n,w_{h_2})$. For each $w_h$, $\mu(w_h)=\mu(x_{r+1})$ and $\lambda(w_h)=\lambda(x_h)$. So by Lemma \ref{lem:final1} and our assumption, $Q_n(\mu(x_{h}),\lambda(w_{h+1}),\mu(w_{h+1}))=x_{h}$. Then by the construction and Corollary \ref{col:pre1}:\begin{align*}
		R_0(n,w_{h+1})+1&\equiv R_0(n,Q_n(\mu(x_{h}),\lambda(w_{h+1}),\mu(w_{h+1}))+w_{h+1})\mod r\\
		&\equiv R_0(n,x_{h}+w_{h+1})\mod r\\
		&\equiv R_0(n,w_{h})\mod r.
	\end{align*} So $R_0(n,w_{h_1})\equiv R_0(n,w_{h_2})-(h_1-h_2) \mod r$ and each $w_h$ has distinct $R(n,w_h)$ value.
\end{proof}

\begin{lemma}\label{lem:final4}
	Let $A_i$ be an infinite set with weak apartness and $R(n,w)$ be our function, there exists $n$, $x\in B^n\cap A_i$ and $w\in FS(A_i)$ that $\lambda(w)>n$, such that: \begin{equation*}
		R(n,w)=x
	\end{equation*}

\end{lemma}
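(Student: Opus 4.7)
The plan is to combine Lemma \ref{lem:lim2} with Lemma \ref{lem:final3} via a simple pigeonhole argument. First I would invoke Lemma \ref{lem:lim2} to locate a natural number $n$ with $I(n)=i$. By the formula $I(n)=\min(Z(n)\setminus H(n)\cup\{\infty\})$ proved in Lemma \ref{lem:lim1}, the condition $I(n)=i$ forces $i\in Z(n)$, which is precisely the statement that $B^n\cap A_i\neq\emptyset$; combined with weak apartness, this means $B^n\cap A_i$ consists of a single element $x$.

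Next I would apply Lemma \ref{lem:final3} with this $n$ and with $r:=|B^n|$, obtaining $r$ distinct elements $w_0,\dots,w_{r-1}\in FS(A_i)$, each satisfying $\lambda(w_j)>n$ and such that the values $R(n,w_0),\dots,R(n,w_{r-1})$ are pairwise distinct. Since $R(n,\cdot)$ takes values in the $r$-element set $B^n$, the distinct values $\{R(n,w_j):j<r\}$ must exhaust $B^n$. In particular, the unique element $x$ of $A_i\cap B^n$ equals $R(n,w_j)$ for some $j<r$. Setting $w:=w_j$ then gives the desired triple $(n,x,w)$ with $R(n,w)=x$, $x\in B^n\cap A_i$, $w\in FS(A_i)$, and $\lambda(w)>n$.

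The argument is essentially pure bookkeeping: the genuine combinatorial content has already been isolated in Lemmas \ref{lem:lim2} and \ref{lem:final3}, together with the construction of $R$ via Corollary \ref{col:pre1} that made the values $R(n,w_j)$ cycle through the additive group $\mathbb{Z}/r\mathbb{Z}$. The only point requiring mild care is the verification that $I(n)=i$ really entails $A_i\cap B^n\neq\emptyset$; this is not obstacle-level difficult but is the hinge on which the whole argument rests, and I would spell it out explicitly by citing Lemma \ref{lem:lim1}.
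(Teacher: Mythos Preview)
Your proposal is correct and follows essentially the same approach as the paper: invoke Lemma~\ref{lem:lim2} to obtain $n$ with $I(n)=i$, use Lemma~\ref{lem:lim1} to conclude $i\in Z(n)$ and hence $A_i\cap B^n\neq\emptyset$, then apply Lemma~\ref{lem:final3} with $r=|B^n|$ and pigeonhole to hit the desired $x$. The only cosmetic difference is that the paper verifies $A_i\cap B^n\neq\emptyset$ after invoking Lemma~\ref{lem:final3} rather than before, and does not bother to note (as you do) that weak apartness makes $x$ unique.
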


\begin{proof}
	By Lemma \ref{lem:lim2}, there exists $n$ such that $I(n)=i$ and by Lemma \ref{lem:final3}, for $r=|B^n|$, there exist numbers $w_i,i<r$ all in $FS(A_i)$ such that $\lambda(w_i)>n$ and each $w_i$ has a distinct $R(n,w_i)$ value. Because $R(n,w_i)\in B^n$ for any such $w_i$, for any $x\in B^n$, there exists one $w_i$ among them such that $R(n,w_i)=x$.
	
	By Lemma \ref{lem:lim1}, $i=I(n)=\min\{Z(n)\setminus H(n)\cup\{\infty\}\}.$ Because $i$ is a number rather than $\infty$, $i\in Z(n)=\{i<n:A_i\cap B^n\neq \emptyset\}$. Hence $A_i\cap B^n\neq\emptyset$. 
	
	Therefore we can choose $x\in B^n\cap A_i$ and $w_i$ such that $R(n,w_i)=x$ and these $n,w_i,x$ are what we need.
\end{proof}

\begin{theorem}\label{thm:c2s3:main1}
	There exists a recursive coloring function $c:\mathbb{N}\rightarrow 2$ such that for any infinite $\Pi_3^0$ set $A_i$ with weak apartness, there exists $w_1,w_2\in FS(A_i)$ such that $c(w_1)\neq c(w_2)$.
\end{theorem}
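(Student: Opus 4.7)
The plan is to apply Lemma~\ref{lem:main0} to the recursive function $R(n,w)$ constructed in this section, which by construction satisfies $R(n,w)\in B^n$. That lemma then directly delivers a recursive two-coloring $c:\mathbb{N}\to 2$ with the property that $c(w)\neq c(w+R(n,w))$ whenever $n<\lambda(w)$.

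To verify that no infinite $\Pi_3^0$ set $A_i$ with weak apartness is a solution for $c$, I would fix such a set and apply Lemma~\ref{lem:final4}, obtaining $n$, $x\in B^n\cap A_i$, and $w\in FS(A_i)$ with $\lambda(w)>n$ and $R(n,w)=x$. Setting $w_1:=w$ and $w_2:=w+x$, Lemma~\ref{lem:main0} immediately gives
\[
c(w_2)\;=\;c(w_1+R(n,w_1))\;\neq\;c(w_1).
\]
The only thing still to confirm is that $w_2\in FS(A_i)$, i.e.\ that $w+x$ is itself a finite sum of distinct elements of $A_i$.

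This last point is the only delicate step. Tracing back through Lemma~\ref{lem:final4} to Lemmas~\ref{lem:final3} and~\ref{lem:final1}, the witness $w$ is realized as $\sum_{g=h}^{r+1}x_g$ for a $\ll$-increasing sequence $\langle x_g\rangle\subseteq A_i$ produced by Lemma~\ref{lem:final1}, whose free parameter $Y$ may be chosen larger than $n$. Then $\lambda(x_g)\geq\lambda(x_1)>n=\mu(x)\geq\lambda(x)$, so $x_g\neq x$ for every $g$, and the finite set $\{x,x_h,x_{h+1},\dots,x_{r+1}\}\subseteq A_i$ consists of distinct elements whose sum equals $w_2$. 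Hence $w_2\in FS(A_i)$ and the two required witnesses are produced.

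The main obstacle is precisely this bookkeeping about the representation of $w$: Lemmas~\ref{lem:final3} and~\ref{lem:final4} assert only that $w\in FS(A_i)$ and do not explicitly record a summand list, so one must revisit their proofs, and in particular ensure $Y>n$ at the Lemma~\ref{lem:final1} step, to rule out that the element $x$ returned by Lemma~\ref{lem:final4} happens to already appear in the canonical representation of $w$. Once this is in place, the rest is a straightforward combination of Lemma~\ref{lem:main0} with the key relation $R(n,w)=x$ delivered by Lemma~\ref{lem:final4}.
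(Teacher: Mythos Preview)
Your proposal is correct and follows essentially the same line as the paper: obtain $c$ from $R(n,w)$ via Lemma~\ref{lem:main0}, invoke Lemma~\ref{lem:final4} to get $n$, $x\in B^n\cap A_i$, $w\in FS(A_i)$ with $\lambda(w)>n$ and $R(n,w)=x$, and conclude $c(w)\neq c(w+x)$ with both in $FS(A_i)$. The paper in fact cites Corollary~\ref{col:pre1} at this step, but since the constructed $R(n,w)$ genuinely depends on $w$ (not only on $\lambda(w),\mu(w)$), your choice of Lemma~\ref{lem:main0} is the more accurate reference.

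Where you are more careful than the paper is in justifying $w+x\in FS(A_i)$. The paper simply asserts this from $\lambda(w)>n=\mu(x)$, which by itself does not exclude $x$ from the summand set of $w$. Your observation that one must go back to the proof of Lemma~\ref{lem:final3} and take $Y\geq n$ in the application of Lemma~\ref{lem:final1}, so that every summand $x_g$ of $w$ satisfies $\lambda(x_g)>n=\mu(x)$ and hence $x_g\neq x$, is exactly the missing detail. This choice of $Y$ is in any case implicit in the paper, since condition~(1) of Lemma~\ref{lem:final3} already records $\lambda(w_j)>n$, which forces $Y\geq n$; you have simply made explicit what the paper left to the reader.
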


\begin{proof}
	Use the function $R(n,w)$ in our construction and apply Corollary \ref{col:pre1} for $R(n,w)$ to get a coloring function $c$. We show that $c$ is what we need.
	
	For any infinite $\Pi_3^0$ set $A_i$ with weak apartness, by Lemma \ref{lem:final4}, there exists numbers $n,x,w$ such that $x\in B^n\cap A_i$, $\lambda(w)>n$, $w\in FS(A_i)$ and $R(n,w)=x$. By the Corollary \ref{col:pre1}, $c(w)\neq c(w+R(n,w))$ if $n<\lambda(w)$. So $c(w)\neq c(w+x)$. Because $\lambda(w)>n=\mu(x)$, both $w$ and $w+x$ are in $FS(A_i)$. Two numbers $w,w+x$ are both in $FS(A_i)$ and $c(w)\neq c(w+x)$. This is what we want to show.
\end{proof}

\section{unsolvabilities Without apartness}

In this section, we will prove $\Delta_3^0$ unsolvability for $\text{\rm{HT}}^{\leq 3}$ without weak apartness and $\Pi_3^0$ unsolvability without weak apartness.

\begin{lemma}\label{lem:remove}
	There exists a recursive coloring function $c_0:\mathbb{N}\rightarrow \{0,1\}^2$ such that for any infinite set $A$, if it does not have weak apartness, then it is not a solution to $c$ for $\text{\rm{HT}}^{\leq 2}$.
\end{lemma}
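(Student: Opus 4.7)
The plan is to write down an explicit $4$-coloring whose two coordinates separately detect the two possible failures of weak apartness. Concretely, I would define
\[
c_0(x) \;=\; \bigl(\mu(x)\bmod 2,\ \lambda(x)\bmod 2\bigr)\in\{0,1\}^2,
\]
which is visibly recursive. The first coordinate will handle repeated $\mu$-values in $A$, and the second will handle $\lambda$-levels of $A$ of size at least three.

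Unpacking the definition of weak apartness, if an infinite set $A$ fails to have it, then either (a) there exist distinct $x,y\in A$ with $\mu(x)=\mu(y)=m$ for some $m$, or (b) there exist three distinct elements $x,y,z\in A$ with $\lambda(x)=\lambda(y)=\lambda(z)=l$ for some $l$. I would treat these two cases independently, each time producing two elements of $FS^{\leq 2}(A)$ on which $c_0$ disagrees.

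In case (a), I would observe that $2^m\le x,y<2^{m+1}$ implies $2^{m+1}\le x+y<2^{m+2}$, so $\mu(x+y)=m+1$ exactly. Hence $c_0(x)$ and $c_0(x+y)$ differ in the first coordinate, and both lie in $FS^{\leq 2}(A)$. In case (b), I would write $x=2^l(2k_1+1)$, $y=2^l(2k_2+1)$, $z=2^l(2k_3+1)$ and apply pigeonhole to the three parities of $k_1,k_2,k_3$: at least two must coincide, say $k_1\equiv k_2\pmod 2$. Then $k_1+k_2+1$ is odd, so $x+y=2^{l+1}(k_1+k_2+1)$ gives $\lambda(x+y)=l+1$ exactly, and the second coordinate of $c_0$ separates $x$ from $x+y$.

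There is no real obstacle; the only subtle point worth flagging is why three elements (not two) are needed in case (b). With only two elements at the same $\lambda$-level one gets $\lambda(x+y)\ge l+1$ but not equality, since the sum of two odd numbers can be divisible by any power of two. The parity pigeonhole on three elements is exactly what forces $\lambda(x+y)=l+1$ for some pair, and this matches precisely the threshold ``at most two'' that the definition of weak apartness imposes.
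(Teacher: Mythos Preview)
Your proof is correct and essentially identical to the paper's: the coloring $c_0(x)=(\mu(x)\bmod 2,\lambda(x)\bmod 2)$ and the two-case analysis match exactly, with your parity pigeonhole on the $k_i$ being just a rephrasing of the paper's residue argument modulo $2^{l+2}$. Your closing remark explaining why three elements are genuinely needed in case (b) is a nice addition not present in the original.
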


\begin{proof}
	We define two coloring functions $c_1,c_2:\mathbb{N}\rightarrow \{0,1\}$:\begin{align*}
		c_1(x) \equiv \mu(x) \mod 2;\\
		c_2(x) \equiv \lambda(x) \mod 2.
	\end{align*}	We show that for $c_0=c_1\times c_2$, if an infinite set $A$ has not weak apartness, then it is not monochromatic under $c_0$. If $A$ has not weak apartness, there are two cases.
	
	The first case is that, there are two numbers $x_1,x_2$ such that $\mu(x_1)=\mu(x_2)=m$. This means these two numbers are in the interval $[2^m,2^{m+1})$. Thus $\mu(x_1+x_2)=m+1$, and $c_1(x_1)\neq c_1(x_1+x_2)$. Therefore, $A$ is not monochromatic.

	The second case is that, there are three number $x_1,x_2,x_3$ such that $\lambda(x_1)=\lambda(x_2)=\lambda(x_3)=l$. In this case, all three numbers are divisible by $2^l$ and not by $2^{l+1}$. So for any $X$ among these three numbers, $X\equiv 2^l \text{ or }2^{l+1}+2^l\mod 2^{l+2}$ and there must be two number $X_1,X_2$ among $x_1,x_2,x_3$ such that $X_1\equiv X_2 \mod 2^{l+2}$. Hence $X_1+X_2$ is divisible by $2^{l+1}$ but not $2^{l+2}$ and $\lambda(X_1+X_2)=l+1$. Since $\lambda(x_1)=l$, $c_2(X_1+X_2)\neq c_2(x_1)$ and $A$ is not monochromatic.
	
	In each cases, $A$ is not monochromatic. Therefore, if $A$ does not have weak apartness, then it is not monochromatic under $c_0=c_1\times c_2$.
\end{proof}

Using Lemma \ref{lem:remove}, we can prove our theorems.

\begin{theorem}
	There exists a recursive coloring function $c:\mathbb{N}\rightarrow \{0,1\}^3$ such that any infinite $\Delta_3^0$ set $A$ is not a solution to $c$ for $\text{\rm{HT}}^{\leq 3}$.
\end{theorem}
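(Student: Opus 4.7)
The plan is to obtain the desired coloring as a product of two colorings already constructed in the paper: the recursive $c_1:\mathbb{N}\to\{0,1\}$ from Theorem \ref{prop:result0}, which kills every infinite $\Delta_3^0$ set \emph{with} weak apartness at the level of $FS^{\leq 3}$, and the recursive $c_0:\mathbb{N}\to\{0,1\}^2$ from Lemma \ref{lem:remove}, which kills every infinite set \emph{without} weak apartness already at the level of $FS^{\leq 2}$. Setting $c=c_1\times c_0:\mathbb{N}\to\{0,1\}^3$ gives a recursive function, so the only point to check is that every infinite $\Delta_3^0$ set fails to have $FS^{\leq 3}(A)$ monochromatic under $c$.

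The verification splits into two cases. If $A$ has weak apartness, then by Theorem \ref{prop:result0} there exist $x\ll w_1\ll w_2$ in $A$ with $c_1(w_1+w_2)\neq c_1(x+w_1+w_2)$; both sums lie in $FS^{\leq 3}(A)$ by construction, and disagreement on the first coordinate $c_1$ is enough to conclude $c(w_1+w_2)\neq c(x+w_1+w_2)$. If $A$ does not have weak apartness, then by Lemma \ref{lem:remove} there are two elements of $FS^{\leq 2}(A)$ on which $c_0$ disagrees, and since $FS^{\leq 2}(A)\subseteq FS^{\leq 3}(A)$, disagreement on the second and third coordinates witnesses non-monochromaticity for $c$ on $FS^{\leq 3}(A)$.

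There is essentially no real obstacle: the whole work has been done in Theorem \ref{prop:result0} and Lemma \ref{lem:remove}; the present statement is obtained by taking a product coloring and doing a trivial case split on whether $A$ has weak apartness. The only thing to double-check is the sum-length bookkeeping, namely that the witnesses produced by Theorem \ref{prop:result0} (which involve sums of up to three elements of $A$) and by Lemma \ref{lem:remove} (sums of at most two) both land in $FS^{\leq 3}(A)$; this is immediate.
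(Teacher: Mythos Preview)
Your proposal is correct and is essentially identical to the paper's own proof: the paper also defines $c=c_1\times c_0$ with $c_1$ from Theorem~\ref{prop:result0} and $c_0$ from Lemma~\ref{lem:remove}, and then does the same case split on whether $A$ has weak apartness. Your added remark that the witnesses land in $FS^{\leq 3}(A)$ (sums of three in the first case, sums of at most two in the second) is exactly the bookkeeping the paper leaves implicit.
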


\begin{proof}
	By Theorem \ref{prop:result0}, there exists a recursive coloring $c_1:\mathbb{N}\rightarrow\{0,1\}$ such that any infinite $\Delta_3^0$ set with weak apartness is not a solution for $\text{\rm{HT}}^{\leq 3}$.
	
	Let $c=c_1\times c_0$ where $c_0$ is the coloring in Lemma \ref{lem:remove} and we claim that this coloring is what we need.
	
	For any infinite $\Delta^0_3$ set $A$, if it has weak apartness, then it is not a solution to $c_1$ for $\text{\rm{HT}}^{\leq 3}$; if it does not have weak apartness, then it is not a solution to $c_0$ for $\text{\rm{HT}}^{\leq 2}$ (hence $\text{\rm{HT}}^{\leq 3}$). In each case, $A$ is not a solution to $c=c_1\times c_0$ for $\text{\rm{HT}}^{\leq 3}$. 
\end{proof}

\begin{theorem}
	There exists a recursive coloring function $c:\mathbb{N}\rightarrow \{0,1\}^3$ such that, any infinite $\Pi_3^0$ set $A$ is not a solution to $c$ for $\text{\rm{HT}}$.
\end{theorem}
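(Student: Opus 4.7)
The plan is to adapt the product-coloring strategy used in the preceding theorem (the $\Delta_3^0$-unsolvability of $\text{HT}^{\leq 3}$) to the $\Pi_3^0$ setting. The only two ingredients needed are Theorem \ref{thm:c2s3:main1}, which handles infinite $\Pi_3^0$ sets that already have weak apartness, and Lemma \ref{lem:remove}, which takes care of any infinite set lacking weak apartness (with only two sums involved, so at the $\text{HT}^{\leq 2}$ level). Taking the Cartesian product of these two colorings gives a single recursive coloring with codomain $\{0,1\}^3$ that kills both classes simultaneously.

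Concretely, let $c_1 : \mathbb{N} \to \{0,1\}$ be the recursive coloring provided by Theorem \ref{thm:c2s3:main1}, and let $c_0 : \mathbb{N} \to \{0,1\}^2$ be the recursive coloring from Lemma \ref{lem:remove}. Define
\begin{equation*}
c := c_1 \times c_0 : \mathbb{N} \to \{0,1\}^3.
\end{equation*}
Since both factors are recursive, $c$ is recursive. The verification then splits into two cases on an arbitrary infinite $\Pi_3^0$ set $A$. If $A$ has weak apartness, Theorem \ref{thm:c2s3:main1} supplies $w_1, w_2 \in FS(A)$ with $c_1(w_1)\neq c_1(w_2)$, which forces $c(w_1)\neq c(w_2)$, so $FS(A)$ is not monochromatic under $c$. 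If $A$ does not have weak apartness, Lemma \ref{lem:remove} yields $u_1, u_2 \in FS^{\leq 2}(A)\subseteq FS(A)$ with $c_0(u_1)\neq c_0(u_2)$, and again $c(u_1)\neq c(u_2)$. In either case $A$ is not a solution to $c$ for $\text{HT}$.

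There is essentially no obstacle beyond assembling the two pieces; the work has already been done in Section 3 and in Lemma \ref{lem:remove}. The one point worth flagging is why this yields unsolvability for full $\text{HT}$ rather than only $\text{HT}^{\leq k}$ for some $k$: the conclusion of Theorem \ref{thm:c2s3:main1} asserts the existence of two elements $w_1, w_2$ of the \emph{full} $FS(A)$ with different $c_1$-colors, without any bound on the number of summands. This is precisely the feature that distinguishes this theorem from its $\Delta_3^0$ counterpart (Theorem \ref{prop:result0}), which only produced sums of at most three elements and therefore, after the same product construction, only ruled out $\Delta_3^0$ solutions for $\text{HT}^{\leq 3}$. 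No apartness-to-weak-apartness downgrade (as in the Hindman lemma used at the end of Section 2) is required here, because weak apartness is already the right hypothesis for Theorem \ref{thm:c2s3:main1} and Lemma \ref{lem:remove} exactly complements it.
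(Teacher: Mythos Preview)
Your proposal is correct and follows essentially the same approach as the paper: take the product of the coloring from Theorem \ref{thm:c2s3:main1} (handling the weak-apartness case) with the coloring $c_0$ from Lemma \ref{lem:remove} (handling the non-weak-apartness case), and split into the two cases. Your additional remark explaining why this yields full $\text{HT}$ rather than a bounded version is a helpful clarification not spelled out in the paper.
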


\begin{proof}
	By Theorem \ref{thm:c2s3:main1}, there exists a recursive coloring $c_2:\mathbb{N}\rightarrow\{0,1\}$ such that any infinite $\Pi_3^0$ set with weak apartness is not a solution to $c_2$ for $\text{\rm{HT}}$.
	
	Let $c=c_2\times c_0$ where $c_0$ is the coloring in Lemma \ref{lem:remove} and we claim that this coloring is what we want.
	
	For any infinite $\Pi^0_3$ set $A$, if it has weak apartness, then it is not a solution to $c_2$ for $\text{\rm{HT}}$; if it does not have weak apartness, then it is not a solution to $c_0$ for $\text{\rm{HT}}^{\leq 2}$ (hence $\text{\rm{HT}}$). So in either case, $A$ is not a solution to $c=c_2\times c_0$ for $\text{\rm{HT}}$. 
\end{proof}

	\section{Open Questions}

Here we list some open questions related to our topic. The first question is to prove the $\Delta_4^0$ case:

\noindent{\bfseries	Question 1}: Is there any recursive coloring without any $\Delta_4^0$ solution for Hindman's theorem?

Similarly, in our paper we prove the $\Delta_3^0$-unsolvability of not only Hindman's theorem, but $\text{\rm{HT}}^{\leq 3}$. However, we have not been able to prove the same theorem for the $\Pi_3^0$ case so far. Therefore, we pose the following question:

\noindent{\bfseries	Question 2}: Is there any recursive coloring such that any infinite $\Pi_3^0$ sets is not a solution for $\text{\rm{HT}}^{\leq 3}$?

Another similar question is about the number of colors although. In this paper we do not focus on this, it is still worth to consider:

\noindent{\bfseries	Question 3}: Is there any recursive 2-coloring such that any infinite $\Delta_3^0$ sets is not a solution for $\text{\rm{HT}}^{\leq 3}$ and is there any recursive 2-coloring  such that any infinite $\Pi_3^0$ sets is not a solution for Hindman's theorm?

%
%
%

Of course, the aim of this type of questions is to find an example without any arithmetic solution. However, there is currently no mainstream consensus on whether the Hindman's theorem can be proven within $\text{\rm{ACA}}_0$. It is also possible that every coloring has an arithmetic or even $\Delta_4^0$ solution. This belongs to the opposite side of reverse math of Hindman's theorem, the upper bound of a solution.


\bibliographystyle{amsplain}
\bibliography{hindman}

\end{document}